\newcommand{\upset}{\ensuremath{\mathord{\uparrow}\mkern1mu}} 
\newcommand{\downset}{\ensuremath{\mathord{\downarrow}\mkern1mu}} 
\author{Marco Abbadini}
\address{Dipartimento di Matematica {\sl Federigo Enriques}, Universit\`a degli Studi di Milano, via Cesare Saldini 50, 20133 Milano, Italy}
\email{marco.abbadini@unimi.it}
\author{Luca Reggio}
\address{Department of Computer Science, University of Oxford, UK}
\email{luca.reggio@cs.ox.ac.uk}
\thanks{To appear in \emph{Applied Categorical Structures}}
\title{\vspace*{-1.1em}On the axiomatisability of the dual of compact ordered spaces}
\theoremstyle{plain}
\newtheorem{theorem}{Theorem}
\newtheorem{proposition}[theorem]{Proposition}
\newtheorem{lemma}[theorem]{Lemma}
\newtheorem{corollary}[theorem]{Corollary}
\newtheorem*{claim*}{Claim}
\theoremstyle{definition}
\newtheorem{definition}[theorem]{Definition}
\newtheorem{remark}[theorem]{Remark}
\newtheorem*{notation}{Notation}
\newcommand{\adj}{\raisebox{.07\baselineskip}{\ensuremath{\scriptscriptstyle \top}}}
\newcommand{\refl}{\rho} 
\newcommand{\op}{\mathrm{op}} 
\renewcommand{\epsilon}{\varepsilon}
\renewcommand{\theta}{\vartheta}
\renewcommand{\phi}{\varphi}
\newcommand{\les}{\preccurlyeq}
\newcommand{\seq}{\subseteq}
\DeclareMathOperator{\image}{Im}
\newcommand{\fix}{\Phi}
\newcommand{\Quot}{\mathbf{Q}} 
\newcommand{\Preo}{\mathbf{P}} 
\DeclareMathOperator{\KH}{\mathsf{KH}} 
\DeclareMathOperator{\PreC}{\mathsf{KH}_{\scriptsize \les}} 
\DeclareMathOperator{\PC}{\mathsf{KH}_{\scriptsize \leq}} 
\DeclareMathOperator{\PCop}{\mathsf{KH}_{\scriptsize \leq}^\op}
\DeclareMathOperator{\Set}{\mathsf{Set}}
\renewcommand{\int}{[0,1]}
\renewcommand{\leq}{\leqslant}
\newcommand{\epi}{\twoheadrightarrow}
\newcommand{\mono}{\rightarrowtail}
\newcommand{\rmono}{\hookrightarrow}
\newcommand{\C}{\mathsf{C}}
\newcommand{\F}{\mathsf{F}}
\renewcommand{\Pr}{\mathsf{PSp}}
\newcommand{\SP}{{\rm SP}}
\begin{document}

\maketitle

\vspace{-0.8cm}
\begin{abstract}
We provide a direct and elementary proof of the fact that the category of Nachbin's compact ordered spaces is dually equivalent to an $\aleph_1$-ary variety of algebras. Further, we show that $\aleph_1$ is a sharp bound: compact ordered spaces are not dually equivalent to any $\SP$-class of finitary algebras. 
\end{abstract}

\vspace{0.5cm}
In 1936, in his landmark paper~\cite{Stone1936}, M.\ H.\ Stone described what is nowadays known as Stone duality for Boolean algebras. In modern terms, it states that the category of Boolean algebras with homomorphisms is dually equivalent to the category of totally disconnected compact Hausdorff spaces with continuous maps. If we drop the assumption of total disconnectedness, we are left with the category $\KH$ of compact Hausdorff spaces and continuous maps. Duskin showed in 1969 that the opposite category $\KH^\op$ --- which, by Gelfand-Naimark duality~\cite{GN1943}, can be identified with the category of commutative unital $\mathrm{C}^*$-algebras --- is monadic over the category of sets and functions \cite[{}5.15.3]{Duskin1969}. In fact, $\KH^{\op}$ is equivalent to a variety of algebras. Although not finitary, this is an $\aleph_1$-ary variety. That is, it can be described by operations of at most countably infinite arity. A generating set of operations was exhibited by Isbell~\cite{Isbell1982}, while a finite axiomatisation of this variety was provided in~\cite{MarraReggio2017}. Therefore, if we allow for infinitary operations, Stone duality for Boolean algebras can be lifted to compact Hausdorff spaces, retaining the algebraic nature.

Shortly after his paper on the duality for Boolean algebras, Stone published a generalisation of this theory to distributive lattices~\cite{Stone1938}. In his formulation, the dual category consists of the nowadays called spectral spaces and perfect maps. While spectral spaces are in general not Hausdorff, H.\ A.\ Priestley showed in 1970 that they can be equivalently described as certain compact Hausdorff spaces equipped with a partial order relation~\cite{Priestley1970}. More precisely, Priestley duality states that the category of (bounded) distributive lattices is dually equivalent to the full subcategory of Nachbin's compact ordered spaces on the totally order-disconnected objects (cf.\ Definitions~\ref{def:compact-pospace} and~\ref{def:Priestley-space}). As with Boolean algebras, one may ask if Priestley duality can be lifted to the category $\PC$ of compact ordered spaces while retaining its algebraic nature. In~\cite{HNN2018}, the authors showed that $\PCop$ is equivalent to an $\aleph_1$-ary quasi-variety and partially described its algebraic theory. In the recent work~\cite{Abbadini2019}, the first-named author proved that $\PCop$ is in fact equivalent to an $\aleph_1$-ary variety, by providing an equational axiomatisation. The result is rather involved, and is based on an algebraic language whose finitary reduct extends the positive part of the language of MV-algebras~\cite{cdm2000}.

In this note we provide a new proof of the fact that $\PCop$ is equivalent to an $\aleph_1$-ary variety, which relies only on properties of Nachbin's compact ordered spaces. The structure of our proof is the following. A well-known result in category theory, recalled in Section~\ref{s:varieties-as-categories}, characterises those categories which are equivalent to some variety of possibly infinitary algebras. A key property, which distinguishes varieties among quasi-varieties, is the effectiveness of (internal) equivalence relations. In Section~\ref{s:compact-pospaces} we recall some basic facts about compact ordered spaces. Further, we state Theorem~\ref{th:effective}, asserting that equivalence relations in $\PCop$ are effective, and show that it implies that $\PCop$ is equivalent to an $\aleph_1$-ary variety. Sections~\ref{s:equivalence-co-relations}--\ref{s:proof-of-main-res} contain the proof of Theorem~\ref{th:effective}. First, we characterise equivalence relations on a compact ordered space $X$, in the category $\PCop$, as certain pre-orders on the order-topological coproduct $X+X$. Then, we rephrase effectiveness into an order-theoretic condition and show that it is satisfied by every pre-order arising from an equivalence relation. Finally, in Section~\ref{s:epilogue}, we show that the bound $\aleph_1$ is best possible: $\PC$ is not dually equivalent to any class of finitary algebras which is closed under taking subalgebras and Cartesian products.

\begin{notation}
Given morphisms $f_i\colon X\to Y_i$ for $i\in\{0,1\}$, the unique morphism induced by the universal property of the product is $\langle f_0, f_1\rangle\colon X\to Y_0\times Y_1$. Similarly, given morphisms $g_i\colon X_i\to Y$ with $i\in\{0,1\}$, the coproduct map is $\binom{g_0}{g_1}\colon X_0+X_1\to Y$. For infinite coproducts, we use the notation $\sum_{i\in I}{X_i}$. Epimorphisms are denoted by $\epi$, while monomorphisms (resp.\ regular monomorphisms) by $\mono$ (resp.\ $\rmono$). We use the symbol $\les$ for pre-orders, and $\leq$ for partial orders.
\end{notation}

\section{Varieties as categories}\label{s:varieties-as-categories}
In this section we provide the background needed to state a well-known characterisation of those categories which are equivalent to some (quasi-)variety of algebras. See Theorem~\ref{th:char-quasi-varieties} below. Throughout, all categories are assumed to be locally small and, unless otherwise stated, (quasi-)varieties admit possibly infinitary function symbols in their signatures.

Recall from~\cite{Borceux-vol2} or~\cite{BGvO} that a category $\C$ is \emph{regular} provided \emph{(i)} it has finite limits, \emph{(ii)} it admits coequalisers of kernel pairs, and \emph{(iii)} regular epimorphisms in $\C$ are stable under pullbacks. For instance, varieties and quasi-varieties of algebras (with homomorphisms) are regular categories. Those regular categories in which there is a good correspondence between regular epimorphisms and equivalence relations are called \emph{exact}. In order to give a precise definition, we recall the notion of equivalence relation in a category.

Let $\C$ be a category with finite limits and $A$ an object of $\C$. An \emph{(internal) equivalence relation} on $A$ is a subobject 
$\langle p_0,p_1\rangle\colon R\mono A\times A$ satisfying the following properties:
\begin{description}[leftmargin=*,labelindent=8pt]
\item[reflexivity] there exists a morphism $d\colon A\to R$ in $\C$ such that the following diagram commutes;
\[\begin{tikzcd}
A\arrow[swap,tail]{rd}{\langle 1_A,1_A\rangle}\arrow[dashed]{rr}{d} & & R\arrow[tail]{dl}{\langle p_0,p_1\rangle}\\
& A\times A &
\end{tikzcd}\]
\item[symmetry] there exists a morphism $s\colon R\to R$ in $\C$ such that the following diagram commutes;
\[\begin{tikzcd}
R\arrow[dashed]{rr}{s}\arrow[tail,swap]{dr}{\langle p_1,p_0\rangle} & & R\arrow[tail]{dl}{\langle p_0,p_1\rangle}\\
& A\times A &
\end{tikzcd}\]
\item[transitivity] if the left-hand diagram below is a pullback square in $\C$,
\[\begin{tikzcd}
P \arrow{r}{\pi_1} \arrow[swap]{d}{\pi_0}  \arrow[dr, phantom, "\ulcorner", very near start] & R \arrow{d}{p_0} \\
R \arrow[swap]{r}{p_1} & A
\end{tikzcd}
 \ \ \ \ \ \ 
\begin{tikzcd}
P\arrow[swap]{rd}{\langle p_0\circ \pi_0,p_1\circ\pi_1\rangle}\arrow[dashed]{rr}{t}&&R\arrow[tail]{dl}{\langle p_0,p_1\rangle}\\
& A\times A &
\end{tikzcd}
\]
then there is a morphism $t\colon P\to R$ such that the right-hand diagram commutes.
\end{description}
\begin{definition}\label{d:effective-exact}
An equivalence relation $\langle p_0,p_1\rangle\colon R\mono A\times A$ is \emph{effective} if it coincides with the kernel pair of the coequaliser of $p_0$ and $p_1$. A regular category $\C$ is \emph{exact} if every equivalence relation in $\C$ is effective.
\end{definition}
For categories of algebras, the definition of equivalence relation given above coincides with the usual notion of congruence. Varieties of algebras are therefore exact categories, while the effective equivalence relations in quasi-varieties are the so-called \emph{relative} congruences.

We need one last piece of terminology to state the desired characterisation of (quasi-)varieties of algebras. 
Recall that an object $G$ of a category $\C$ is a \emph{regular generator} if \emph{(i)} for every set $I$ the copower $\sum_I{G}$ exists in $\C$, and \emph{(ii)} for every object $A$ of $\C$, the canonical morphism
\[
\sum_{\hom_{\C}(G,A)}{G}\to A
\]
is a regular epimorphism. Further, $G$ is \emph{regular projective} if, for every morphism $f\colon G\to A$ and regular epimorphism $g\colon B\to A$, $f$ factors through $g$. We can now state the following well-known result.

\begin{theorem}\label{th:char-quasi-varieties}
For a category $\C$, consider the following conditions:
\begin{enumerate}
\item $\C$ is regular with coequalisers of equivalence relations;
\item $\C$ has a regular projective regular generator $G$;
\item every equivalence relation in $\C$ is effective.
\end{enumerate}
The category $\C$ is equivalent to a quasi-variety if and only if it satisfies $1$ and $2$, and it is equivalent to a variety if and only if it satisfies $1$, $2$ and $3$.\qed
\end{theorem}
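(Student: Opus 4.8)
The plan is to deduce both equivalences from monadicity over $\Set$. The two ``only if'' implications are routine, so I would dispose of them first: a (quasi-)variety is a regular category (as recalled in the text) and is cocomplete, so in particular it has coequalisers of equivalence relations; moreover its free algebra $G=F(1)$ on one generator is a regular projective regular generator, since every copower $\sum_I G$ exists and is isomorphic to the free algebra $F(I)$, the counit component $\eps_A\colon F(UA)=\sum_{UA}G\epi A$ of the free--forgetful adjunction is surjective and hence a regular epimorphism, and free algebras lift against surjective homomorphisms --- which, in a (quasi-)variety, are precisely the regular epimorphisms. For the variety case one adds the classical fact that varieties are exact, which is condition~$3$.

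For the ``if'' directions, fix a regular projective regular generator $G$ and set $U:=\hom_\C(G,-)\colon\C\to\Set$. Existence of all copowers of $G$ furnishes a left adjoint $F$ with $F(S)=\sum_S G$ and a natural bijection $\hom_\C(\sum_S G,A)\cong\hom_\Set(S,UA)$; the second clause in the definition of regular generator says precisely that each counit component $\eps_A\colon\sum_{UA}G\epi A$ is a regular epimorphism. Hence $U$ is faithful (as $G$ generates), $U$ preserves regular epimorphisms (as $G$ is regular projective), $U$ preserves all limits (being a right adjoint), and --- using the image factorisations available in the regular category $\C$ --- $U$ is conservative. I would then let $\mathbb T$ be the monad on $\Set$ induced by $F\dashv U$ and $K\colon\C\to\Set^{\mathbb T}$ the comparison functor.

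The heart of the matter is to show that $K$ is a full embedding. Using regularity of $\C$ and the assumed coequalisers of equivalence relations, one checks that every object $A$ is the coequaliser in $\C$ of its canonical presentation $\sum_{U(\sum_{UA}G)}G\rightrightarrows\sum_{UA}G\xrightarrow{\eps_A}A$, whose parallel pair is the kernel pair of $\eps_A$ --- an equivalence relation on $\sum_{UA}G$ in the sense fixed above. Since $U$ preserves this coequaliser (here regularity and preservation of regular epimorphisms are used), is faithful, and preserves the relevant limits, the standard descent criterion gives that $K$ is fully faithful. Now the two cases split. If condition~$3$ also holds, $\C$ is exact; Beck's monadicity theorem --- whose hypotheses are met, $U$ being conservative and preserving regular epimorphisms --- then shows that $K$ is an equivalence, so $\C\simeq\Set^{\mathbb T}$, and one reads off an infinitary equational presentation, the $I$-ary operation symbols being the elements of $\hom_\C(G,\sum_I G)$ and the axioms all equations valid in $\C$, whence $\C$ is equivalent to a variety. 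Without condition~$3$ one obtains only that $K$ is a full, replete embedding whose essential image is closed under products (since the forgetful functor $\Set^{\mathbb T}\to\Set$ creates limits and $U$ preserves them) and under subobjects (since $U$ reflects, and $K$ preserves, monomorphisms); thus $\C$ is equivalent to an $\SP$-class inside the variety $\Set^{\mathbb T}$, that is, to a quasi-variety.

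I expect the main obstacle to be the verification that $U$ preserves the canonical presentation of each object, so that the descent and monadicity machinery can be brought to bear: this is the single point at which regularity of $\C$, faithfulness of $U$, preservation of regular epimorphisms, and the existence of coequalisers of equivalence relations all have to cooperate, and it is what the three hypotheses are tailored to supply. A subsidiary subtlety --- needed only to upgrade ``$\C$ is monadic over $\Set$'' to ``$\C$ is equivalent to a variety'' --- is to confirm that the category of algebras for an arbitrary monad on $\Set$ really is presented by operations and equations; this holds provided one allows a signature whose operation symbols form a proper class of unbounded arity, which is exactly why the statement imposes no cardinality restriction. The details can be found in~\cite{Borceux-vol2,BGvO}.
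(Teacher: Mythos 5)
The paper itself offers no proof of this theorem: it is stated with a qed symbol and delegated to the cited literature (Pedicchio for quasi-varieties, Borceux and Vitale for varieties). Your proposal reconstructs the standard argument from exactly those sources --- monadicity of $U=\hom_\C(G,-)$ via the comparison functor $K$, with exactness separating the variety case from the quasi-variety case --- so the architecture is the right one. However, two of your justifications do not hold up as written. The most serious concerns closure of the essential image of $K$ under products: your reason, ``the forgetful functor $\Set^{\mathbb{T}}\to\Set$ creates limits and $U$ preserves them'', presupposes that $\C$ has the relevant infinite products, whereas condition 1 only provides \emph{finite} limits. Since a quasi-variety is closed under all products, these products must be \emph{constructed} from conditions 1 and 2: for instance, given $(A_i)_{i\in I}$, set $S=\prod_i UA_i$, transport the kernel congruence of the structure map $TS\to S$ of $\prod_i KA_i$ back along the fully faithful $K$ to an equivalence relation on $\sum_S G$ in $\C$ (this uses closure of the image of $K$ under subalgebras, which in turn requires the regular image factorisation of $\sum_{S'}G\to A$ for a subalgebra $S'\leq UA$, not merely the remark that ``$U$ reflects monomorphisms''), take its coequaliser in $\C$ using condition 1, and verify that $U$ carries it to $S$. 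As written, your argument assumes what is to be proved.

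Second, ``Beck's monadicity theorem, whose hypotheses are met, $U$ being conservative and preserving regular epimorphisms'' is not accurate: Beck's theorem asks for coequalisers of $U$-split pairs, which condition 1 does not supply. The appropriate tool is Duskin's criterion, which only requires $\C$ to have, and $U$ to preserve, coequalisers of $U$-contractible \emph{equivalence relations}: condition 1 gives their existence, condition 3 makes every equivalence relation effective, and $U$ preserves the coequaliser of an effective equivalence relation because it preserves kernel pairs (being a right adjoint) and regular epimorphisms (by regular projectivity of $G$), while a surjection in $\Set$ is the coequaliser of its kernel pair. Finally, a caveat on your closing remark: if ``variety'' were allowed to mean a proper class of operation symbols of unbounded arity, the \emph{only if} direction would fail (complete lattices form such a class with no free object on three generators, hence no regular generator admitting all copowers); the theorem must be read with the convention that a variety is one presented by operations and equations for which free algebras exist, equivalently a category of the form $\Set^{\mathbb{T}}$.
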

The abstract characterisation of varieties and quasi-varieties has a long history in category theory, starting with the works of Lawvere, Isbell, Linton, Felscher and Duskin in the 1960s. We do not attempt here to provide an accurate historical account. For the case of quasi-varieties we refer the reader to \cite[Theorem~1.8]{Pedicchio1996}, and for varieties to \cite[Theorem~4.4.5]{Borceux-vol2} or~\cite{Vitale1994}. Further, we point out that the assumption that $\C$ be regular can be omitted provided $\C$ has all coequalisers, cf.\ \cite[Theorem~3.6]{Adamek2004}.

\section{Compact ordered spaces and their dual variety}\label{s:compact-pospaces}
We collect here some basic facts about compact ordered spaces, first introduced by Nachbin~\cite{Nachbin}. In particular, we describe their limits and colimits. This will come handy in the following sections.
\begin{definition}\label{def:compact-pospace}
A \emph{compact ordered space} (or \emph{compact pospace}, for short) is a pair $(X,\leq)$ where $X$ is a compact space and $\leq$ is a partial order on $X$ which is closed in the product topology of $X\times X$. We write $\PC$ for the category of compact pospaces and continuous monotone maps.  
\end{definition}
A basic example of compact pospace is the unit interval $\int$ equipped with the Euclidean topology and its usual total order.
Note that, for any compact pospace $(X,\leq)$, the opposite order ${\leq^\op}=\{(x,y)\mid y\leq x\}$ is also closed in the product topology of $X\times X$. The intersection ${\leq}\cap{\leq^\op}$ coincides with the diagonal $\Delta_X=\{(x,x)\mid x\in X\}$, which is thus closed in $X\times X$. That is, $X$ is a Hausdorff space. 

This shows that there is a forgetful functor $\PC\to\KH$, where $\KH$ denotes the category of compact Hausdorff spaces and continuous maps. On the other hand, there is also a functor $\Delta\colon \KH\to\PC$ sending a compact Hausdorff space $X$ to the compact pospace $(X,\Delta_X)$. It is readily seen that $\Delta$ is left adjoint to the forgetful functor $\PC\to\KH$. In symbols,
\begin{equation}\label{eq:adjoint-Lambda}
\begin{tikzcd}
\PC \arrow[yshift=5pt]{rr} & {\adj} & \KH \arrow[yshift=-5pt]{ll}{\Delta}.
\end{tikzcd}
\end{equation}

We will see in a moment that $\PC$ admits all limits and colimits. By the adjunction in~\eqref{eq:adjoint-Lambda}, limits in $\PC$ are computed in $\KH$, whence in the category of sets. However, this is not the case for colimits. To circumvent this issue, we embed $\PC$ in a larger category where colimits admit a simpler description.
\begin{definition}
A \emph{pre-ordered compact Hausdorff space} is a pair $(X,\les)$ where $X$ is a compact Hausdorff space and $\les$ is a pre-order on $X$ which is closed in the product topology of $X\times X$. We write $\PreC$ for the category of pre-ordered compact Hausdorff spaces and continuous monotone maps.  
\end{definition}

Clearly, $\PC$ is a full subcategory of $\PreC$ and the adjunction in~\eqref{eq:adjoint-Lambda} lifts to an adjunction between $\PreC$ and $\KH$. Further, the forgetful functor $\PreC\to\KH$ has, in addition to the left adjoint $\Delta$, also a right adjoint. Write $\nabla\colon \KH\to\PreC$ for the functor sending a compact Hausdorff space $X$ to the pre-ordered compact Hausdorff space $(X,\nabla_X)$, where $\nabla_X=X\times X$ is the improper relation on $X$. It is immediate that $\nabla$ is right adjoint to the forgetful functor $\PreC\to\KH$.

Given a pre-ordered compact Hausdorff space $(X,\les)$, we can consider the quotient of $X$ with respect to the symmetrization of $\les$, that is the equivalence relation ${\sim}={\les}\cap {\les^\op}$. The pre-order $\les$ descends to a partial order $\leq$ on the quotient space $X/{\sim}$, and the map
\[
\rho_X\colon (X,\les)\epi (X/{\sim},\leq)
\]
is continuous and monotone. The pair $(X/{\sim},\leq)$ is readily seen to be a compact pospace. This assignment extends to a functor $\refl\colon \PreC\to\PC$ which is left adjoint to the inclusion $\PC\to\PreC$. In other words, $\PC$ is a reflective subcategory of $\PreC$.
\[\begin{tikzcd}
\PC \arrow[yshift=5pt]{rr} & \adj & \PreC \arrow{rr} \arrow[yshift=-5pt]{ll}{\refl} & & \KH \arrow[yshift=7pt]{ll}[outer sep=-0.5pt]{\scriptscriptstyle\top}[swap]{\nabla} \arrow[yshift=-7pt]{ll}{\Delta}[swap, outer sep=-0.2pt]{\scriptscriptstyle\top}
\end{tikzcd}\] 
The category $\PreC$ is complete and cocomplete because the forgetful functor $\PreC\to\KH$ is topological~\cite[Example~2]{Tholen2009}, hence so is its reflective subcategory $\PC$. Since the forgetful functor $\PreC\to\KH$ has a right adjoint, colimits in $\PreC$ are computed in $\KH$. In turn, the colimit of a diagram in $\PC$ can be obtained by first computing the colimit in $\PreC$, and then applying the reflector $\refl$.
For more details, cf.\ Remark~\ref{rm:pushouts}.
\begin{proposition}\label{p:properties-of-PC}
The following statements hold:
\begin{enumerate}
\item the regular monomorphisms in $\PC$ are the continuous order-embeddings;
\item the epimorphisms in $\PC$ are the continuous monotone surjections;
\item the unit interval $\int$ is a regular injective regular cogenerator in $\PC$.
\end{enumerate}
\end{proposition}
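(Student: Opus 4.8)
The plan is to prove the three statements of Proposition~\ref{p:properties-of-PC} in turn, using the functorial machinery (the adjunctions $\Delta \dashv U$, $U \dashv \nabla$, and $\refl \dashv$ inclusion) recalled above, together with the fact that limits in $\PC$ are computed in $\KH$ hence in $\Set$.

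\begin{proof}[Proof proposal]
\emph{Item (1).} First I would show a continuous monotone map $m\colon (X,\leq)\to(Y,\leq)$ is a regular monomorphism in $\PC$ iff it is an order-embedding (i.e.\ $m(x)\leq m(x')$ implies $x\leq x'$; note injectivity follows since $\leq$ is a partial order). For the ``only if'' direction: a regular mono is the equaliser of some pair $f,g\colon Y\rightrightarrows Z$; since equalisers in $\PC$ are computed in $\KH$, hence in $\Set$, the underlying map is injective, and the subspace $X$ carries the subspace order inherited from $Y$, so $m$ is an order-embedding. For the ``if'' direction, given an order-embedding $m\colon X\rmono Y$, I would exhibit it as an equaliser. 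The natural candidate is to use the cogenerator $\int$: consider all pairs of maps $Y\to\int$ that agree on (the image of) $X$, and show $X$ is their joint equaliser. This requires a separation statement — for $y\in Y$ not in the image of $m$, or for $y_1\nleq y_2$, one needs a monotone continuous map $Y\to\int$ witnessing the failure. This is exactly Nachbin's separation theorem for compact ordered spaces (the order-topological analogue of Urysohn's lemma): if $y_1\nleq y_2$ in a compact pospace $Y$, there is a continuous monotone $h\colon Y\to\int$ with $h(y_1)=1$, $h(y_2)=0$. I would cite this (Nachbin~\cite{Nachbin}) and assemble the equaliser diagram from it.

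\emph{Item (2).} A continuous monotone surjection is clearly epi. Conversely, suppose $f\colon X\to Y$ is epi in $\PC$ but not surjective; then $Z:=\overline{f(X)}$ is a proper closed (hence compact) sub-pospace of $Y$, and $f$ factors as $X\to Z\rmono Y$. Pick $y_0\in Y\setminus Z$. Using Nachbin separation again I would build two distinct continuous monotone maps $Y\rightrightarrows\int$ agreeing on $Z$ (hence agreeing after composing with $f$) but differing at $y_0$ — for instance one of them constant and the other a Nachbin function separating $y_0$ from $Z$ in a suitable sense; some care is needed because $Z$ need not be a down-set or up-set, but separating the compact set $Z$ from the point $y_0$ along the order can be done by combining finitely many Nachbin functions. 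This contradicts $f$ being epi.

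\emph{Item (3).} To see $\int$ is a regular cogenerator, I would show that for every $X$ in $\PC$ the canonical morphism $X\to\int^{\,\hom_{\PC}(X,\int)}$ is a regular monomorphism; by item (1) it suffices that this map be an order-embedding, which is again a direct consequence of Nachbin's separation theorem (monotone continuous $\int$-valued maps separate points and detect the order). For regular injectivity of $\int$: given a regular mono $m\colon A\rmono B$ (an order-embedding by item (1)) and a morphism $g\colon A\to\int$, I must extend $g$ along $m$ to $\tilde g\colon B\to\int$. Identifying $A$ with a closed sub-pospace of $B$, this is precisely the Tietze-type extension theorem for compact ordered spaces: a continuous monotone map from a closed sub-pospace into $\int$ extends to the whole space (again due to Nachbin; it follows from the separation theorem by the usual successive-approximation argument). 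I would cite this and conclude.

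\emph{Main obstacle.} The only nontrivial input is Nachbin's order-theoretic separation and extension theory for compact pospaces; everything else is formal manipulation of (co)limits via the stated adjunctions. The subtlest point of the write-up is item (2), where the closed image $Z$ is neither up- nor down-closed, so one cannot directly apply a single Nachbin function — one must separate the \emph{point} $y_0$ from the \emph{compact} set $Z$ respecting the order, which requires a small compactness argument patching together finitely many separating functions. I expect that to be the step needing the most care.
\end{proof}
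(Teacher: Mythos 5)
Your proposal is correct and rests on exactly the ingredients the paper itself invokes: the paper's proof is a one-line citation of \cite[Theorem~2.6]{HNN2018} together with Nachbin's separation theorems (\cite[Chapter~I, Theorems~1 and~4]{Nachbin}) for the regular cogenerator property and his extension theorem (\cite[Chapter~I, Theorem~6]{Nachbin}) for regular injectivity. You simply unfold the standard arguments from these same results (including the genuinely delicate point in item~(2), which you correctly identify and which is handled by the usual compactness-plus-lattice-operations patching), so the approach is essentially identical, only more detailed than the paper's citation.
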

\begin{proof}
See, e.g., \cite[Theorem~2.6]{HNN2018}. In particular, the unit interval is a regular cogenerator in $\PC$ by \cite[Chapter~I, Theorems~1 and~4]{Nachbin}, and it is regular injective by \cite[Chapter~I, Theorem~6]{Nachbin}.
\end{proof}
\begin{remark}\label{rm:pushouts}
Let $X, Y$ be compact pospaces. Their coproduct in $\PreC$ is the disjoint union of $X$ and $Y$, with the coproduct topology and the coproduct pre-order. The latter is a compact pospace, whence it coincides with the coproduct of $X$ and $Y$ in $\PC$.
Next, we describe certain pushouts in $\PC$. 

Consider regular monomorphisms $f_0\colon X\rmono Y_0$, $f_1\colon X\rmono Y_1$ in $\PC$ and their pushout in the category $\PreC$, as displayed in the following diagram.
\[\begin{tikzcd}
X \arrow[hookrightarrow]{r}{f_1} & Y_1 \arrow{d}{\lambda_0} \\
Y_0 \arrow[hookleftarrow]{u}{f_0} \arrow[swap]{r}{\lambda_1} & P \arrow[ul, phantom, "\lrcorner", very near start]
\end{tikzcd}\]
As a space, $P$ is homeomorphic to the quotient of the coproduct space $Y_0+Y_1$ with respect to the equivalence relation
\[
\{(f_i(x),f_j(x))\in (Y_0+Y_1)\times (Y_0+Y_1) \mid x\in X, \ i,j \in \{0,1\}\}
\]
(transitivity follows because $f_0$ and $f_1$ are order-embeddings by item~$1$ in Proposition~\ref{p:properties-of-PC}), which is easily seen to be closed in the product topology.
Let $i\in\{0,1\}$, and write $i^*=1-i$. With this notation, the pre-order on $P$ is given by $\Theta=\Theta'\cup\Theta''$, where
\begin{gather*}
\Theta'=\{(p,q)\in P\times P\mid \exists i\in\{0,1\}, \ \exists w\in \lambda_{i}^{-1}(p), \ \exists w'\in\lambda_{i}^{-1}(q), \ w\leq_{Y_i} w'\}, \\
\Theta''=\{(p,q)\in P\times P\mid \exists i\in\{0,1\}, \ \exists w\in \lambda_{i^*}^{-1}(p), \ \exists w'\in\lambda_{i}^{-1}(q), \ \exists x\in X, \ w\leq_{Y_i} f_i(x) \text{ and } f_{i^*}(x)\leq_{Y_{i^*}} w'\}.
\end{gather*}
The relation $\Theta$ is clearly reflexive, and is seen to be transitive again by item~$1$ in Proposition~\ref{p:properties-of-PC}. Note that $\Theta'=\bigcup_{i\in\{0,1\}}{(\lambda_i\times \lambda_i)(\leq_{Y_i})}$ is closed in $P\times P$. On the other hand,
\begin{align*}
\Theta''&=\bigcup_{i\in\{0,1\}}{(\lambda_{i^*}\times \lambda_i)(\{(w,w')\in Y_i\times Y_{i^*}\mid \exists x\in X, \ w\leq_{Y_i} f_i(x), \ f_{i^*}(x)\leq_{Y_{i^*}}w'\})} \\
&=\bigcup_{i\in\{0,1\}}{(\lambda_{i^*}\times \lambda_i)(\{(w,w')\in Y_i\times Y_{i^*}\mid \exists x\in X, (w,w')\leq_{Y_i\times Y_{i^*}^\partial} (f_i(x),f_{i^*}(x))\})} \\
&=\bigcup_{i\in\{0,1\}}{(\lambda_{i^*}\times \lambda_i)(\downset \image(X\xrightarrow{\langle f_i,f_{i^*}\rangle} Y_i\times Y_{i^*}^\partial))},
\end{align*}
where $Y_{i^*}^\partial=(Y_{i^*},\leq_{Y_{i^*}}^\op)$.
Since the downward closure $\downset D$ of any closed subset $D$ of a compact pospace is again closed~\cite[Proposition~4]{Nachbin}, we conclude that $\Theta''$ is also closed. Whence, $\Theta$ is a closed pre-order. It is not difficult to see that it is the smallest pre-order on $P$ making $\lambda_0$ and $\lambda_1$ monotone. Finally, the pushout of $f_0$ along $f_1$ in $\PC$ is obtained by applying the reflector $\refl\colon \PreC\to\PC$ to $P$.
\end{remark}
\begin{corollary}\label{cor:PCop-quasi-variety}
The category $\PC$ is dually equivalent to a quasi-variety of algebras.
\end{corollary}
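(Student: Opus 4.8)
The plan is to verify that $\PCop$ satisfies conditions~1 and~2 of Theorem~\ref{th:char-quasi-varieties}, i.e.\ that $\PCop$ is regular with coequalisers of equivalence relations and has a regular projective regular generator. Dualising, this amounts to checking that $\PC$ is \emph{coregular} (the dual of regular) with equalisers of co-equivalence relations, and that it has a regular injective regular \emph{cogenerator}. By Proposition~\ref{p:properties-of-PC}, the unit interval $\int$ is a regular injective regular cogenerator in $\PC$, so condition~2 is immediate; it remains to establish the (co)regularity condition~1.

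First I would recall that $\PC$ is complete and cocomplete (noted in the excerpt, since the forgetful functor $\PC \to \KH$ is topological), so in particular $\PCop$ has finite limits, kernel pairs, and coequalisers — and thus also coequalisers of equivalence relations. The substantive point is that $\PCop$ is regular, i.e.\ that $\PC$ is coregular: it has finite colimits, pushouts of cokernel pairs exist (clear from cocompleteness), and regular monomorphisms in $\PC$ are stable under pushout. By Proposition~\ref{p:properties-of-PC}(1), the regular monomorphisms in $\PC$ are exactly the continuous order-embeddings, so what must be shown is that pushing out a continuous order-embedding along an arbitrary morphism yields again a continuous order-embedding. Here Remark~\ref{rm:pushouts} does part of the work: it gives an explicit description of the pushout of a cospan of regular monomorphisms $f_0\colon X\rmono Y_0$, $f_1\colon X\rmono Y_1$, via the coproduct pre-order $\Theta$ on $Y_0 + Y_1$ quotiented and then reflected into $\PC$. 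Using the concrete formula for $\Theta = \Theta' \cup \Theta''$ and the fact that $f_0, f_1$ are order-embeddings, one checks directly that the resulting map $\lambda_i\colon Y_i \to P$ (after applying $\refl$) is an order-embedding — this is where the closedness computations in the Remark, together with Nachbin's theorem that $\downset D$ is closed, are used.

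The main obstacle I expect is precisely this stability-under-pushout step: one needs that along the composite $Y_i \to P \epi \refl(P)$ the order is reflected, i.e.\ if $\refl(P)$ identifies or orders the images of two points of $Y_i$, this already happens in $Y_i$. Unwinding $\refl$, this means showing that if $(\lambda_i(w), \lambda_i(w'))$ lies in the pre-order $\Theta$ on $P$, then $w \leq_{Y_i} w'$ in $Y_i$; the case analysis over $\Theta'$ versus $\Theta''$ reduces, via the order-embedding hypothesis on $f_0$ and $f_1$, to chasing the zig-zags defining $\Theta''$ back into $X$ and then into $Y_i$. One also needs to check the generic pushout (along an arbitrary morphism, not just a cospan of monos), but this follows from the regular-mono case by factoring the arbitrary leg through its image, using that $\PC$ has an (epi, regular-mono) factorisation system — which is a consequence of Proposition~\ref{p:properties-of-PC}(2) and completeness.

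Finally, with conditions~1 and~2 verified, Theorem~\ref{th:char-quasi-varieties} yields that $\PCop$ is equivalent to a quasi-variety. To pin down that it is an \emph{$\aleph_1$-ary} quasi-variety — and thereby recover the sharper statement of \cite{HNN2018} — one observes that the regular cogenerator $\int$ has only countably many points in each of its finite powers' worth of ``elements'' relevant to forming operations; more precisely, the hom-sets $\hom_{\PC}(\int^{\,\alpha}, \int)$ that furnish the $\alpha$-ary operations of the dual theory are governed by continuous monotone maps out of metrisable compacta, which forces arities to stay countable. I would state this last refinement briefly, citing \cite{HNN2018} for the arity bound, since the thrust of the corollary here is the qualitative fact that $\PCop$ is (quasi-)equational, to be upgraded to a genuine variety in the next section via Theorem~\ref{th:effective}.
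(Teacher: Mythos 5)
Your proposal follows essentially the same route as the paper: both verify conditions 1 and 2 of Theorem~\ref{th:char-quasi-varieties} by taking $G=\int$ (via item~3 of Proposition~\ref{p:properties-of-PC}) and by reducing regularity of $\PCop$ to stability of continuous order-embeddings under pushout in $\PC$, checked through the explicit description of Remark~\ref{rm:pushouts}. The closing digression on $\aleph_1$-arity is not needed for this corollary (the paper establishes the arity bound, via Stone--Weierstrass, only in the subsequent corollary on varieties), but it does no harm.
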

\begin{proof}
By Theorem~\ref{th:char-quasi-varieties}, it is enough to show that \emph{(i)} $\PCop$ is regular with coequalisers of equivalence relations, and \emph{(ii)} it admits a regular projective regular generator $G$. 

We already observed that $\PC$ is complete and cocomplete. Whence, so is $\PCop$. To show that $\PCop$ is regular, it suffices to prove that regular monos, i.e.\ continuous order-embeddings, are stable under pushouts in $\PC$. Pushouts in $\PC$ can be computed by first taking the pushout in $\PreC$, and then composing with the reflection map. Reasoning as in Remark~\ref{rm:pushouts}, it is not difficult to see that the pushout of a continuous order-embedding in $\PreC$ is again a continuous order-embedding. Further, composing with the reflection yields again a continuous order-embedding, i.e.\ a regular mono in $\PC$. This proves \emph{(i)}. In turn, \emph{(ii)} follows at once from item $3$ in Proposition~\ref{p:properties-of-PC}, by setting $G=\int$. We mention that one may also deduce \emph{(i)} from \emph{(ii)} and the fact that $\PCop$ is cocomplete, cf.~\cite[Theorem~3.6]{Adamek2004}.
\end{proof}
The latter fact was already observed in~\cite{HNN2018} where, in addition, the authors provide a description of an $\aleph_1$-ary quasi-variety dually equivalent to $\PC$ (see Theorem~3.15 in \emph{op.\ cit.}). Our main contribution consists in a direct proof of the following result:
\begin{theorem}\label{th:effective}
Every equivalence relation in $\PCop$ is effective.
\end{theorem}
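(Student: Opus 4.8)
The plan is to translate effectiveness of equivalence relations in $\PCop$ — that is, coeffectiveness of coequivalence relations in $\PC$ — into a concrete order-topological statement about pre-orders on the coproduct $X + X$, and then verify that statement directly. Since $\PC$ is dually a quasi-variety, equivalence relations in $\PCop$ correspond to certain quotient-like cospans in $\PC$. Dualising the diagrams in the definition of internal equivalence relation, an equivalence relation on an object $A$ of $\PCop$ is a regular epimorphism $\langle p_0, p_1 \rangle$'s dual, i.e.\ a pair of morphisms $A \rightrightarrows R$ in $\PC$ jointly regular-monic (a regular subobject of $A + A$ in $\PC$, by item~1 of Proposition~\ref{p:properties-of-PC}), equipped with co-reflexivity, co-symmetry and co-transitivity maps. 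Concretely I expect: for a compact pospace $X = A$, such a thing amounts to a closed pre-order $\les$ on the coproduct space $X + X$ that (i) restricts to the given order $\leq$ on each copy of $X$, (ii) is symmetric in the two copies, and (iii) satisfies a transitivity-style closure condition coming from the pullback $P$ in the transitivity axiom. The first real step, then, is to carry out this dualisation carefully and pin down exactly which pre-orders on $X + X$ arise. This is the content promised for Section~\ref{s:equivalence-co-relations}.

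The second step is to rephrase effectiveness. Effectiveness of the equivalence relation means: the cospan $A \rightrightarrows R$ is the cokernel pair (pushout of $p_0, p_1$'s dual along themselves) of its joint coequaliser — dually, the equaliser in $\PC$ of the two coprojections into the pushout. Using the description of pushouts of regular monos in $\PC$ from Remark~\ref{rm:pushouts} (pushout in $\PreC$ followed by the reflector $\refl$), I would compute the relevant pushout of $X \rightrightarrows$ (the dual of $R$) explicitly. The upshot should be an order-theoretic criterion: the pre-order $\les$ on $X + X$ coming from the equivalence relation must coincide with the pre-order generated by "going through" the quotient compact pospace $\refl$ of the pushout — equivalently, $\les$ is already ``transitively saturated'' in the sense that $x \les y$ in one mixed direction and $y \les z$ in the other can always be composed within $\les$. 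So effectiveness becomes: every pre-order on $X + X$ arising from an equivalence relation in $\PCop$ is closed under the relevant compositions, i.e.\ it equals the pre-order pulled back from its own symmetrisation-quotient.

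The third and final step is to prove that order-theoretic condition holds. Here I would use the transitivity (co-transitivity) axiom of the equivalence relation together with the fact — used repeatedly in Remark~\ref{rm:pushouts} — that downward (and upward) closures of closed sets in a compact pospace are closed (Nachbin, \cite[Proposition~4]{Nachbin}), and that regular monos in $\PC$ are order-embeddings. The pullback object $P$ in the transitivity square, computed in $\PC$ hence in $\KH$ hence in $\Set$, provides exactly the pairs of ``composable'' $\les$-steps, and the co-transitivity map $t$ witnesses that their composite is again an $\les$-step. Chasing this through the explicit $X+X$ description should give precisely the saturation property that effectiveness demands.

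The main obstacle I anticipate is the bookkeeping in Step~1 and Step~2: getting the dualisation of the pullback-based transitivity axiom right and matching it against the explicit (two-part $\Theta = \Theta' \cup \Theta''$) formula for the pushout pre-order from Remark~\ref{rm:pushouts}. The topological closedness facts are standard and the order-theoretic verification in Step~3 should be short once the right condition is isolated; the delicate part is ensuring that ``equivalence relation in $\PCop$'' really is captured by the class of pre-orders on $X+X$ I claim, with no extra or missing conditions — in particular that the reflector $\refl$ does not introduce discrepancies between the pushout computed in $\PreC$ and the one in $\PC$.
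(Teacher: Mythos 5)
Your Steps 1 and 2 coincide with the paper's strategy: Section~\ref{s:equivalence-co-relations} characterises equivalence co-relations as closed pre-orders $\les$ on $X+X$ satisfying co-reflexivity, co-symmetry and the co-transitivity interpolation property of Lemma~\ref{l:co-transitive}, and Proposition~\ref{p:effective-char} rephrases effectiveness as the condition that whenever $(x,i)\les (y,i^*)$ there exists $z$ with $x\leq z\leq y$ and $(z,i)\sim(z,i^*)$. Up to that point your outline is on track, including the observation that the inclusion ${\les^Y}\subseteq{\les}$ is the easy direction.

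The gap is in your Step 3. You propose to derive the effectiveness criterion by ``chasing'' the co-transitivity map through the explicit description of the pushout, on the grounds that composable $\les$-steps compose within $\les$. But co-transitivity only yields, from $(x,i)\les(y,i^*)$, an interpolant $z$ with $(x,i)\les(z,i^*)$ and $(z,i)\les(y,i^*)$; effectiveness demands a point $z$ that is actually identified across the two copies, i.e.\ $(z,i)\sim(z,i^*)$. This is strictly stronger and is not reached by any finite number of applications of the interpolation property: each application only produces another interpolant, not a fixed point of the symmetrisation. The paper closes this gap with a compactness argument that your plan omits entirely: one forms the set $\Omega$ of all interpolants between $x$ and $y$, shows it is closed (hence that every chain has a supremum in $\Omega$, using that directed sets in compact pospaces have suprema lying in their topological closure), applies Zorn's Lemma to obtain a maximal interpolant $z$, and then uses co-transitivity once more together with co-reflexivity and maximality to force $(z,i)\les(z,i^*)$, whence $(z,i)\sim(z,i^*)$ by co-symmetry. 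Without this maximality-plus-compactness step, or some equivalent limit argument, the order-theoretic condition of Proposition~\ref{p:effective-char} does not follow from the axioms of an equivalence co-relation.
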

A proof of the previous theorem is provided in Sections~\ref{s:equivalence-co-relations}--\ref{s:proof-of-main-res}. We conclude this section by observing that Theorem~\ref{th:effective} implies that $\PCop$ is equivalent to an $\aleph_1$-ary variety of algebras, that is a variety of algebras in a language consisting of function symbols of at most countably infinite arity.
\begin{corollary}
The category $\PC$ is dually equivalent to an $\aleph_1$-ary variety of algebras.
\end{corollary}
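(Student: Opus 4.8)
The plan is to verify the three conditions of Theorem~\ref{th:char-quasi-varieties} while keeping track of the arity. Conditions~$1$ and~$2$ are already established in the proof of Corollary~\ref{cor:PCop-quasi-variety}: the category $\PCop$ is regular and cocomplete (being dual to the complete and cocomplete category $\PC$), and $\int$ is a regular projective regular generator. Condition~$3$ is precisely Theorem~\ref{th:effective}. Hence $\PCop$ is equivalent to a variety of possibly infinitary algebras, and by the construction underlying Theorem~\ref{th:char-quasi-varieties} this variety may be taken to have $\int$ as its free algebra on one generator, so that its forgetful functor is $\hom_{\PCop}(\int,-)$ and its $\kappa$-ary operations form the set $\hom_{\PCop}(\int,\coprod_\kappa\int)=\hom_{\PC}(\int^\kappa,\int)$. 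It therefore remains only to bound the arity, and for this it suffices to prove that $\int$ is $\aleph_1$-\emph{presentable} in $\PCop$. Indeed $\coprod_\kappa\int$, that is $\int^\kappa$, is the $\aleph_1$-filtered colimit in $\PCop$ of the $\int^\beta$ with $\beta$ a countable subset of $\kappa$ (equivalently, $\int^\kappa$ is the $\aleph_1$-cofiltered limit of these sub-products in $\PC$), so if $\int$ is $\aleph_1$-presentable then every operation of the variety factors through some $\int^\beta$ with $\beta$ countable, hence is a term in operations of at most countably infinite arity; equivalently, the forgetful functor preserves $\aleph_1$-filtered colimits, which is exactly what it means for the variety to be $\aleph_1$-ary.

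So everything reduces to the following statement: for every $\aleph_1$-cofiltered diagram $(X_i)_{i\in I}$ in $\PC$ with limit $X$, transition maps $d_{ki}\colon X_k\to X_i$ and limit projections $\pi_i\colon X\to X_i$, the canonical map
\[
\varinjlim_{i\in I}\hom_{\PC}(X_i,\int)\longrightarrow\hom_{\PC}(X,\int)
\]
is a bijection. Here one uses that limits in $\PC$ are computed in $\KH$ (so $X$ is the space of compatible tuples in $\prod_iX_i$ with the coordinatewise order), together with two standard facts: that $\mathrm{Im}(\pi_j)$ is the cofiltered intersection $\bigcap_k\mathrm{Im}(d_{kj})$ of closed subsets of the compact space $X_j$, and Nachbin's order version of the Stone--Weierstrass theorem, which underlies the fact, recalled in Proposition~\ref{p:properties-of-PC}, that $\int$ is a regular cogenerator in $\PC$. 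For \emph{injectivity}: if $g\colon X_i\to\int$ and $g'\colon X_{i'}\to\int$ satisfy $g\circ\pi_i=g'\circ\pi_{i'}$, pass to some $l$ above $i$ and $i'$, so that $g\circ d_{li}$ and $g'\circ d_{li'}$ agree on $\mathrm{Im}(\pi_l)=\bigcap_k\mathrm{Im}(d_{kl})$; for each $n\in\N$ the closed set on which these two maps differ by at least $1/n$ is disjoint from that intersection, hence (by compactness) already disjoint from $\mathrm{Im}(d_{k_n l})$ for some $k_n$ above $l$; choosing, by $\aleph_1$-filteredness, a single stage $k$ above all the $k_n$, the two maps agree on $\mathrm{Im}(d_{kl})$, i.e.\ $g\circ d_{ki}=g'\circ d_{ki'}$.

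For \emph{surjectivity}, given a monotone continuous $f\colon X\to\int$, observe that the functions $g\circ\pi_i$ with $g\colon X_i\to\R$ monotone continuous form a sublattice of the monotone continuous real functions on $X$ which contains the constants and order-separates the points of $X$ --- if $x\not\leq x'$ then $\pi_i(x)\not\leq\pi_i(x')$ for some $i$, since the order on $X$ is coordinatewise, and then the regular cogenerator $\int$ supplies a monotone $g\colon X_i\to\int$ with $g(\pi_i(x))>g(\pi_i(x'))$ --- so by Nachbin's theorem it is uniformly dense among the monotone continuous real functions on $X$. Post-composing with the $1$-Lipschitz monotone retraction $\R\to\int$, pick for each $n$ a monotone continuous $g_n\colon X_{i_n}\to\int$ with $\|g_n\circ\pi_{i_n}-f\|_\infty<2^{-n}$, and bring all the $i_n$ to a common stage $j$ by $\aleph_1$-filteredness, obtaining $\tilde g_n\colon X_j\to\int$ with $\tilde g_n\circ\pi_j=g_n\circ\pi_{i_n}$. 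Then $(\tilde g_n)$ is uniformly Cauchy on $\mathrm{Im}(\pi_j)=\bigcap_k\mathrm{Im}(d_{kj})$, and arguing as for injectivity --- for each $n$ the difference $|\tilde g_n-\tilde g_{n+1}|$ is already smaller than $2^{-n+2}$ on $\mathrm{Im}(d_{k_n j})$ for a suitable $k_n$, and $\aleph_1$-filteredness merges the $k_n$ into a single $k$ above $j$ --- one finds $k$ such that $(\tilde g_n\circ d_{kj})_n$ is uniformly Cauchy on all of $X_k$. Its uniform limit $G\colon X_k\to\int$ is continuous, is monotone (a pointwise limit of the monotone maps $g_n\circ d_{k i_n}$), and satisfies $G\circ\pi_k=f$.

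The step I expect to be the main obstacle is precisely the passage from ``approximable to each finite precision, but a priori at a different stage of the diagram for each precision'' to ``realised exactly at a single stage'': one naturally obtains, for every $n$, a stage beyond which a certain error term is below $1/n$, and it is exactly $\aleph_1$-filteredness (which fails for ordinary $\omega$-chains, where the statement is genuinely false) that allows amalgamating these countably many stages into one. The monotonicity bookkeeping is a secondary subtlety --- plain Stone--Weierstrass would destroy monotonicity, so Nachbin's ordered version is essential, and one must check that the function families involved are order-separating sublattices containing the constants --- but this is routine given the material recalled in Section~\ref{s:compact-pospaces}. Alternatively, one could obtain the $\aleph_1$-presentability of a suitable generator without Stone--Weierstrass, by invoking the $\aleph_1$-ary quasi-variety $\mathsf{Q}$ of~\cite{HNN2018}: its free algebra on one generator is an $\aleph_1$-presentable regular projective regular generator of $\mathsf{Q}$, which, transported along the equivalence $\PCop\simeq\mathsf{Q}$, feeds into the same categorical conclusion.
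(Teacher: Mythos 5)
Your proof is correct in substance and its first half is identical to the paper's: conditions 1--3 of Theorem~\ref{th:char-quasi-varieties} are verified exactly as in Corollary~\ref{cor:PCop-quasi-variety} plus Theorem~\ref{th:effective}, and the arity question is reduced to the behaviour of $\hom_{\PCop}(\int,-)$ on the colimits $\coprod_\kappa\int=\mathrm{colim}_\beta\int^\beta$. Where you diverge is in how that behaviour is established. The paper needs only that the \emph{monad} preserves $\aleph_1$-directed colimits, i.e.\ the single statement that every continuous monotone $f\colon\int^I\to\int$ factors through a countable subproduct $\int^J$; since monotonicity of the factored map is automatic (the projection $\int^I\epi\int^J$ is a surjective order-quotient), this is a purely topological fact about the compact Hausdorff space $[0,1]^I$ and follows in one line from the classical Stone--Weierstrass theorem. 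You instead prove the stronger statement that $\int$ is $\aleph_1$-presentable in $\PCop$ with respect to \emph{all} $\aleph_1$-cofiltered limits, via the identity $\mathrm{Im}(\pi_j)=\bigcap_k\mathrm{Im}(d_{kj})$ and the countable-amalgamation arguments; this is correct and more informative (and your diagnosis of where $\aleph_1$-filteredness is genuinely used is exactly right), but it forces you to work with an \emph{ordered} Stone--Weierstrass theorem, and the hypotheses you quote for it are too weak as stated: a sublattice of monotone continuous functions containing the constants and order-separating the points need not be uniformly dense. (On a two-point antichain, the sublattice generated by the constants and the two separating functions $(1,0)$ and $(0,1)$ contains only functions whose two values differ by at most $1$, so it misses $(2,0)$.) What saves your argument is that the family $\{g\circ\pi_i\}$ is also closed under sums, positive scalar multiples and translation by constants, which yields the two-point order-interpolation property that the Kakutani--Stone lattice form of the theorem actually requires; that closure should be stated and used explicitly. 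With that repaired, your route is a valid, more general, but also more laborious alternative to the paper's; the fallback via the $\aleph_1$-ary quasi-variety of~\cite{HNN2018} that you mention at the end would also work, at the cost of self-containedness.
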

\begin{proof}
By Corollary~\ref{cor:PCop-quasi-variety}, we know that $\PC$ is dually equivalent to a quasi-variety of algebras.  Theorems~\ref{th:char-quasi-varieties} and~\ref{th:effective} entail that $\PC$ is in fact dually equivalent to a variety of algebras. Indeed, $\PCop$ is equivalent to the category of Eilenberg-Moore algebras for the monad induced by the adjunction
\[
\sum_{-}{\int}\dashv \hom_{\PCop}(\int,-)\colon \PCop\to \Set.
\]
The latter is equivalent to an $\aleph_1$-ary variety of algebras if, and only if, the monad preserves $\aleph_1$-directed colimits. It suffices to show that, for every set $I$ and continuous monotone function $f\colon \int^I\to \int$, there is a countable subset $J\subseteq I$ such that $f$ factors through the projection $\int^I\epi\int^J$. In turn, this is a consequence of the Stone-Weierstrass Theorem for compact Hausdorff spaces~\cite{Stone1937}.
\end{proof}

\section{Equivalence co-relations on compact ordered spaces}\label{s:equivalence-co-relations}
In this section we provide a description of equivalence relations in the category $\PCop$, which will then be exploited in the next section to prove that equivalence relations in $\PCop$ are effective.

To start with, we dualise the notion of subobject. Given a compact pospace $X$, a \emph{quotient object} of $X$ is a subobject of $X$ in the category $\PCop$. The poset of quotient objects of $X$ is denoted by $\Quot(X)$.
Explicitly, $\Quot(X)$ is the poset of (equivalence classes of) epimorphisms with domain $X$, where $f_1\colon X\epi Y_1$ is below $f_2\colon X\epi Y_2$ whenever there exists $g\colon Y_2\to Y_1$ such that $g\circ f_2=f_1$.
\[
\begin{tikzcd}
X \arrow[swap,two heads]{rd}{f_2} \arrow[two heads]{r}{f_1} & Y_1 \\
& Y_2 \arrow[dashed]{u}[swap]{g}
\end{tikzcd}
\]
\begin{remark}
We warn the reader that our terminology is non-standard. By a quotient object we do not mean a regular epimorphism, but what may be called a \emph{co-subobject} (not every epimorphism in $\PC$ is regular).
\end{remark}

By definition, an equivalence relation on $X$ in the opposite category $\PCop$ is a subobject of $X\times X$ (where the product is computed in $\PCop$) which is reflexive, symmetric and transitive. This corresponds to a quotient object $\binom{q_0}{q_1}\colon X+ X\epi S$ of the compact pospace $X+X$ satisfying the dual properties:

\begin{minipage}[t]{0.5\textwidth}
\begin{figure}[H]
\centering
\begin{tikzcd}
{} & X+X \arrow[two heads,swap]{dl}{\binom{q_0}{q_1}} \arrow[two heads]{dr}{\binom{1_X}{1_X}} & \\
S\arrow[dashed,swap]{rr}{d} & & X
\end{tikzcd}
{\vspace{-5pt}\caption*{co-reflexivity}}
\end{figure}
\end{minipage}
\begin{minipage}[t]{0.5\textwidth}
\begin{figure}[H]
\centering
\begin{tikzcd}
& X+X \arrow[swap, two heads]{dl}{\binom{q_0}{q_1}} \arrow[two heads]{dr}{\binom{q_1}{q_0}} & \\
S \arrow[dashed,swap]{rr}{s} & & S
\end{tikzcd}
{\vspace{-5pt}\caption*{co-symmetry}}
\end{figure}
\end{minipage}
\begin{figure}[H]
\centering
\begin{tikzcd}
X \arrow{r}{q_0} \arrow[swap]{d}{q_1} & S \arrow{d}{\lambda_1} \\
S \arrow[swap]{r}{\lambda_0} & P \arrow[ul, phantom, "\lrcorner", very near start]
\end{tikzcd}
\ \ \ \ $\Longrightarrow$ \ \ \ \
\begin{tikzcd}
& X+X \arrow[swap, two heads]{dl}{\binom{q_0}{q_1}} \arrow{dr}{\binom{\lambda_0\circ q_0}{\lambda_1\circ q_1}} & \\
S\arrow[dashed,swap]{rr}{t} & & P
\end{tikzcd}
{\vspace{-5pt}\caption*{co-transitivity}}
\end{figure}
\noindent A quotient object of $X+X$ which satisfies the three properties above will be called an \emph{equivalence co-relation} on $X$.
The key observation is that equivalence co-relations are more manageable than their duals, because quotient objects of $X$ are in bijection with certain pre-orders on $X$. 

Indeed, if $f\colon (X,\leq_X)\epi (Y,\leq_Y)$ is an epimorphism in $\PC$, then
\begin{equation*}
{\les_f}=\{(x_1,x_2)\in X\times X\mid f(x_1)\leq_Y f(x_2)\}
\end{equation*}
is a pre-order on $X$. The monotonicity of $f$ entails ${\leq_X}\subseteq{\les_f}$. 
Further, recalling that epimorphisms in $\PC$ are precisely the continuous monotone surjections (see item~$2$ in Proposition~\ref{p:properties-of-PC}), we see that $\les_f$ is closed in $X\times X$ because it coincides with the preimage of $\leq_Y$ under the continuous map $f\times f\colon X\times X\to Y\times Y$. 
Let us denote by $\Preo(X)$ the poset of all closed pre-orders on $X$ which extend $\leq_X$, ordered by reverse inclusion. By the previous discussion, there is a map $\Quot(X)\to \Preo(X)$ sending $f$ to $\les_f$. This function is well-defined, as $\les_f$ does not depend on the choice of a representative in the equivalence class of $f$. 
Conversely, given a pre-order $\les$ in $\Preo(X)$, consider its symmetrization ${\sim}={\les}\cap{\les^\op}$. 
The space $X/{\sim}$, equipped with the quotient topology, is compact. The direct image of $\les$ under the quotient map is a partial order on $X/{\sim}$, and it is closed because so is $\les$. Moreover, since ${\leq_X}\subseteq{\les}$, we get an epimorphism \[X\epi X/{\sim}\] in $\PC$. Taking its equivalence class, we obtain an element of $\Quot(X)$. The following fact follows easily.
\begin{lemma}\label{l:bijection Q E}
For every compact pospace $X$, the assignments
\[ (f\colon X\epi Y) \ \mapsto \ {\les_f} \ \ \text{ and } \ \ {\les} \ \mapsto \ (X\epi X/{\sim})\]
induce an isomorphism between the posets $\Preo(X)$ and $\Quot(X)$.\qed
\end{lemma}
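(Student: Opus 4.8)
The plan is to establish the bijection between $\Preo(X)$ and $\Quot(X)$ by verifying that the two assignments described before the lemma are mutually inverse and order-preserving in both directions. First I would check that the assignment $f \mapsto {\les_f}$ is monotone: if $f_1 \colon X \epi Y_1$ is below $f_2 \colon X \epi Y_2$ in $\Quot(X)$, witnessed by $g \colon Y_2 \to Y_1$ with $g \circ f_2 = f_1$, then monotonicity of $g$ gives $f_2(x_1) \leq_{Y_2} f_2(x_2) \implies f_1(x_1) \leq_{Y_1} f_1(x_2)$, i.e. ${\les_{f_2}} \subseteq {\les_{f_1}}$, which is the right direction for the reverse-inclusion order on $\Preo(X)$. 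Conversely, if ${\les} \subseteq {\les'}$ in $\Preo(X)$, then the symmetrizations satisfy ${\sim} \subseteq {\sim'}$, so there is an induced continuous monotone surjection $X/{\sim} \epi X/{\sim'}$ compatible with the quotient maps, showing the second assignment is monotone as well.

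Next I would verify that the two assignments are inverse to each other. Starting from $\les \in \Preo(X)$, the quotient map $q \colon X \epi X/{\sim}$ has, by construction, $q(x_1) \leq q(x_2)$ in $X/{\sim}$ if and only if $x_1 \les x_2$ — this is exactly the statement that $\les$ descends to the partial order on $X/{\sim}$, together with the fact that the symmetrization is factored out — so ${\les_q} = {\les}$. For the other composite, start from an epimorphism $f \colon X \epi Y$; the equivalence relation ${\les_f} \cap {\les_f^\op}$ is precisely the kernel of $f$ as a map of sets (since $f(x_1) = f(x_2)$ iff $f(x_1) \leq_Y f(x_2)$ and $f(x_2) \leq_Y f(x_1)$, using that $\leq_Y$ is a partial order), so $X/{\sim}$ is homeomorphic to $Y$ via the map induced by $f$; one checks this homeomorphism is an order-isomorphism carrying the quotient partial order to $\leq_Y$, hence $f$ and $X \epi X/{\sim}$ represent the same quotient object. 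Finally I would note that a monotone bijection between posets whose inverse is also monotone is a poset isomorphism, concluding the proof.

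The routine points that need a little care are: that $X/{\sim}$ is genuinely a compact pospace (compactness is automatic as a quotient of a compact space; the descended order is a partial order by design and is closed because $\les$ is closed and the quotient map is closed, being continuous from a compact space to a Hausdorff one — this was already noted in the text preceding the lemma); and that the map $X/{\sim} \to Y$ induced by an epimorphism $f$ is a homeomorphism, which follows since it is a continuous bijection from a compact space to a Hausdorff space. The main (though still mild) obstacle is keeping the direction of the order straight throughout, since $\Preo(X)$ is ordered by \emph{reverse} inclusion and $\Quot(X)$ is ordered via the existence of a factoring morphism; a finer quotient (larger $Y$, smaller kernel) corresponds to a smaller pre-order $\les_f$, which is \emph{larger} in the reverse-inclusion order, so the correspondence is genuinely order-preserving once the conventions are fixed. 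No substantive topological or categorical difficulty arises beyond what is already recorded in the discussion immediately before the lemma statement.
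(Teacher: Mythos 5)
Your proposal is correct and follows exactly the route the paper intends: the paper states the lemma with no written proof (``The following fact follows easily''), relying on the preceding construction of the two assignments, and your argument simply supplies the routine verifications (mutual inversion, monotonicity in both directions, and the order conventions) that the authors leave to the reader. The only point worth a footnote is that Hausdorffness of $X/{\sim}$ should be obtained from $\sim$ being a closed equivalence relation on a compact Hausdorff space \emph{before} invoking closedness of the quotient map, but this is standard and does not affect the argument.
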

\begin{remark}\label{r:bijection not needed surj}
Assume $f_1\colon X\to Y_1$ and $f_2\colon X\to Y_2$ are surjective morphisms in $\PC$. By Lemma~\ref{l:bijection Q E}, there exists $g\colon Y_1\to Y_2$ such that $g\circ f_1=f_2$ if, and only if, $\forall x,y\in X$, $f_1(x)\leq f_1(y)$ implies $f_2(x)\leq f_2(y)$. In fact, it is not difficult to see that this is true even if $f_2$ is not surjective, as we can factor it as a surjective map followed by an injective one.
\end{remark}
Recall from Remark~\ref{rm:pushouts} that the compact pospace $X+X$ is isomorphic to the disjoint union
\[
\{(x,0)\mid x\in X\}\cup \{(x,1)\mid x\in X\},
\]
equipped with the coproduct topology and the coproduct order.
\begin{notation}
We denote the elements of $X+X$ by $(x,i),(y,j),\ldots$ where $i,j$ vary in $\{0,1\}$. Further, $i^*$ stands for $1-i$. For example, $(x,1^*)=(x,0)$.
\end{notation}
For the rest of this section, we fix a quotient object $\binom{q_0}{q_1}\colon X+X\epi S$ of a compact pospace $X$. We write $\les_{\binom{q_0}{q_1}}$, or simply $\les_S$, for the associated pre-order on $X+X$. 
We say that $\les_S$ is \emph{co-reflexive} (\emph{co-symmetric}, \emph{co-transitive}) if so is $\binom{q_0}{q_1}$. 
To improve readability, we write $[(x,i)]$ instead of $\binom{q_0}{q_1}(x,i)$.

\begin{lemma}\label{l:co-refl-symm}
The following statements hold:
\begin{enumerate}
\item the pre-order $\les_S$ is co-reflexive if, and only if, $(x,i)\les_S(y,j)$ entails $x\leq y$;
\item the pre-order $\les_S$ is co-symmetric if, and only if, $(x,i)\les_S(y,j)$ entails $(x,i^*)\les_S (y,j^*)$.
\end{enumerate}
\end{lemma}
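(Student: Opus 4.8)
The plan is to unravel the definitions of co-reflexivity and co-symmetry through the bijection of Lemma~\ref{l:bijection Q E} and Remark~\ref{r:bijection not needed surj}, which translate the existence of the mediating maps $d$ and $s$ into inequalities between the relevant pre-orders on $X+X$.

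For item~$1$, recall that co-reflexivity asserts the existence of $d\colon S\to X$ with $d\circ\binom{q_0}{q_1}=\binom{1_X}{1_X}$. By Remark~\ref{r:bijection not needed surj}, applied with $f_1=\binom{q_0}{q_1}\colon X+X\epi S$ (which is surjective) and $f_2=\binom{1_X}{1_X}\colon X+X\to X$, such a $d$ exists if and only if for all $(x,i),(y,j)\in X+X$, the condition $\binom{q_0}{q_1}(x,i)\leq\binom{q_0}{q_1}(y,j)$ implies $\binom{1_X}{1_X}(x,i)\leq\binom{1_X}{1_X}(y,j)$, i.e.\ $(x,i)\les_S(y,j)$ implies $x\leq y$. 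This is exactly the asserted condition, so item~$1$ follows immediately.

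For item~$2$, co-symmetry asserts the existence of $s\colon S\to S$ with $s\circ\binom{q_0}{q_1}=\binom{q_1}{q_0}$. The subtlety here is that the second map $f_2=\binom{q_1}{q_0}\colon X+X\epi S$ is \emph{also} surjective (it is $\binom{q_0}{q_1}$ precomposed with the swap automorphism of $X+X$), so Remark~\ref{r:bijection not needed surj} applies directly: such an $s$ exists if and only if for all $(x,i),(y,j)$, $\binom{q_0}{q_1}(x,i)\leq\binom{q_0}{q_1}(y,j)$ implies $\binom{q_1}{q_0}(x,i)\leq\binom{q_1}{q_0}(y,j)$. The left-hand side is $(x,i)\les_S(y,j)$ by definition. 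For the right-hand side, observe that $\binom{q_1}{q_0}(x,i)=\binom{q_0}{q_1}(x,i^*)$, since swapping the two factors turns the $i$-th copy into the $i^*$-th; hence $\binom{q_1}{q_0}(x,i)\leq\binom{q_1}{q_0}(y,j)$ is equivalent to $(x,i^*)\les_S(y,j^*)$. This yields precisely the stated biconditional.

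The main obstacle, such as it is, is purely bookkeeping: one must be careful that the swap map $X+X\to X+X$ sending $(x,i)\mapsto(x,i^*)$ is an isomorphism in $\PC$ and that $\binom{q_1}{q_0}=\binom{q_0}{q_1}\circ(\text{swap})$, so that $\binom{q_1}{q_0}$ is indeed surjective and its associated pre-order is the pullback of $\les_S$ along the swap, i.e.\ $(x,i)\les_{\binom{q_1}{q_0}}(y,j)$ iff $(x,i^*)\les_S(y,j^*)$. Once this identification of $\binom{q_1}{q_0}(x,i)$ with $[(x,i^*)]$ is in place, both equivalences drop out of Remark~\ref{r:bijection not needed surj} with no further work.
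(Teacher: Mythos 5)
Your proposal is correct and follows essentially the same route as the paper: both arguments translate the existence of the mediating morphisms $d$ and $s$ into the containments ${\les_S}\seq{\les_{\binom{1_X}{1_X}}}$ and ${\les_S}\seq{\les_{\binom{q_1}{q_0}}}$ and then compute these pre-orders explicitly, with the identification $\binom{q_1}{q_0}(x,i)=[(x,i^*)]$ doing the work in item~2. The only cosmetic difference is that you invoke Remark~\ref{r:bijection not needed surj} where the paper appeals directly to Lemma~\ref{l:bijection Q E} via the poset $\Quot(X+X)$ (all maps involved are in fact surjective, so either works).
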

\begin{proof}
\begin{enumerate}[wide, labelwidth=!, labelindent=5pt]
\item By definition, $\les_S$ is co-reflexive if, and only if, $\binom{q_0}{q_1}\colon X+X\epi S$ is above $\binom{1_X}{1_X}\colon X+X\epi X$ in the poset $\Quot(X+X)$. By Lemma~\ref{l:bijection Q E}, this is equivalent to ${\les_S} \seq {\les_{\binom{1_X}{1_X}}}$. Given $(x,i),(y,j)\in X+X$, we have
\[
(x,i)\les_{\binom{1_X}{1_X}}(y,j) \ \Longleftrightarrow \ x\leq y.
\]
It follows that the pre-order $\les_S$ is co-reflexive if, and only if, $(x,i)\les_S(y,j)$ entails $x\leq y$.
\item Again, by definition, $\les_S$ is co-symmetric if and only if  $\binom{q_0}{q_1}\colon X+X\epi S$ is above $\binom{q_1}{q_0}\colon X+X\epi S$ in $\Quot(X+X)$. By Lemma~\ref{l:bijection Q E}, this happens exactly when ${\les_S} \seq {\les_{\binom{q_1}{q_0}}}$. Given $(x,i),(y,j)\in X+X$,
\[
(x,i)\les_{\binom{q_1}{q_0}}(y,j) \ \Longleftrightarrow \ (x,i^*)\les_S (y,j^*).
\]
Therefore, the pre-order $\les_S$ is co-symmetric if, and only if, $(x,i)\les_S(y,j)$ entails $(x,i^*)\les_S (y,j^*)$.\qedhere
\end{enumerate}
\end{proof}
\begin{lemma}\label{l:co-transitive}
Assume the pre-order $\les_S$ is co-reflexive. Then it is co-transitive if, and only if, 
\[
(x,i)\les_S(y,i^*) \ \Longrightarrow \ \exists z\in X \ [(x,i)\les_S (z,i^*) \text{ and } (z,i)\les_S (y,i^*)].
\]
\end{lemma}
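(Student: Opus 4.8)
The plan is to unwind both directions of the biconditional using the dictionary between quotient objects of $X+X$ and closed pre-orders on $X+X$ established in Lemma~\ref{l:bijection Q E}, together with the explicit description of the relevant pushout in $\PC$ supplied by Remark~\ref{rm:pushouts}. Recall that co-transitivity asserts the existence of a morphism $t\colon S\to P$ filling the relevant triangle, where $P$ is the pushout of $q_0$ and $q_1$ along themselves in $\PC$. By Remark~\ref{r:bijection not needed surj}, the existence of such a $t$ is equivalent to the purely order-theoretic condition that, for all $(x,i),(y,j)\in X+X$,
\[
[(x,i)]\les_S[(y,j)] \ \Longrightarrow \ \binom{\lambda_0\circ q_0}{\lambda_1\circ q_1}(x,i)\leq_P \binom{\lambda_0\circ q_0}{\lambda_1\circ q_1}(y,j).
\]
So the first step is to translate this inequality in $P$, using the description of the pushout pre-order. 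Here $P$ is obtained by taking the pushout in $\PreC$ of $q_0\colon X\to S$ and $q_1\colon X\to S$ — so as a space it is the quotient of $S+S$ identifying $q_0(x)$ in the first copy with $q_1(x)$ in the second copy, for each $x\in X$ — and then applying the reflector $\refl$. The maps $\lambda_0, \lambda_1\colon S\to P$ are the two coprojections (post-composed with $\refl_P$).

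The second step is to compute what $\les_S$-relations on $X+X$ get forced in $P$. Writing $[(x,i)]\les_S[(y,j)]$ out and chasing through the two coprojections: $\lambda_0\circ q_0$ sends $(x,0)$ and $(x,1)$ to (the class of) $q_0(x)$ and $q_1(x)$ respectively in the \emph{first} $S$-copy, while $\lambda_1\circ q_1$ sends them to the \emph{second} $S$-copy — and in $P$ the element $q_0(x)$ of the first copy is glued to $q_1(x)$ of the second. The upshot is that $\binom{\lambda_0\circ q_0}{\lambda_1\circ q_1}$ sends $(x,0)$ and $(x,1)$ to \emph{the same} point of $P$ (namely the common image of $q_0(x)$, first copy, $=q_1(x)$, second copy). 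Hence the target inequality only ever needs to be checked in the form $(x,i)\les_S(y,i^*)$ — which is exactly the shape appearing in the statement — and after the gluing this reduces to the condition that $q_0(x)$ and $q_1(y)$ (say) are comparable in $P$ via the smallest closed pre-order on $P/{\sim}$ generated by $\leq_S$ on both copies together with the identifications. Unravelling ``comparable via that pre-order'' is precisely the assertion that there is an intermediate element $z\in X$ witnessing a two-step zig-zag, i.e. $(x,i)\les_S(z,i^*)$ and $(z,i)\les_S(y,i^*)$; the co-reflexivity hypothesis is what guarantees the intermediate point can be taken of this form (rather than an arbitrary element of $S$), since co-reflexivity forces every $\les_S$-comparison to project down to a comparison in $X$, controlling which elements of $S$ can appear.

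The main obstacle I expect is the third step: carefully justifying that the relation forced on $P$ is generated by \emph{at most two} alternating steps, i.e. that no longer zig-zags through the pushout contribute anything new at the level of the condition being characterised. Concretely, a priori an inequality in $P$ could be witnessed by an arbitrarily long chain alternating between the two $S$-copies and passing through the glued points; one must show, using co-reflexivity (and the closedness of all the pre-orders involved, so that $\downset$ of a closed set is closed, as in Remark~\ref{rm:pushouts}), that such a chain can always be shortened to length two. For the converse direction — assuming the displayed two-step condition, deduce co-transitivity — one simply verifies that the two-step condition is exactly enough to make $\binom{\lambda_0\circ q_0}{\lambda_1\circ q_1}$ factor through $\binom{q_0}{q_1}$, again via Remark~\ref{r:bijection not needed surj}; this direction is routine once the pushout order is written down explicitly. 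I would also keep an eye on the role of co-symmetry: although it is not assumed here, one should check the argument does not secretly use it, and indeed the asymmetry $i$ versus $i^*$ in the statement is precisely what lets us get away with only co-reflexivity.
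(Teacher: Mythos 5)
Your overall strategy is the paper's: translate the existence of $t$ into an order-theoretic condition via Remark~\ref{r:bijection not needed surj}, then compute the pushout pre-order explicitly. But there are two problems. First, a concrete error: you claim that $\binom{\lambda_0\circ q_0}{\lambda_1\circ q_1}$ sends $(x,0)$ and $(x,1)$ to the same point of $P$. It does not. The pushout square identifies the \emph{crossed} composites, $\lambda_1(q_0(w))=\lambda_0(q_1(w))$, whereas the map in question sends $(x,0)\mapsto\lambda_0(q_0(x))$ and $(x,1)\mapsto\lambda_1(q_1(x))$, which are in general distinct (take $S=X+X$ with $q_0,q_1$ the coproduct injections: $P$ is then three copies of $X$ and the two images land in the two outer, unglued copies). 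The correct reason why only the mixed case $(x,i)\les_S(y,i^*)$ carries content is different: when $i=j$ the required inequality $\lambda_i([(x,i)])\leq\lambda_i([(y,i)])$ holds automatically by monotonicity of $\lambda_i$, i.e.\ it is witnessed by the part $\Theta'$ of the pushout pre-order of Remark~\ref{rm:pushouts}.

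Second, the step you defer as ``the main obstacle'' --- that no zig-zag of length greater than two is needed --- is precisely the substance of the lemma, and the mechanism you sketch for it is not the one that works. Co-reflexivity does not enter by ``controlling which elements of $S$ can appear''; it enters by making $q_0$ and $q_1$ sections of the morphism $d\colon S\to X$, hence split monomorphisms, hence continuous order-embeddings, i.e.\ regular monos by Proposition~\ref{p:properties-of-PC}. That is exactly the hypothesis under which Remark~\ref{rm:pushouts} applies, and that remark shows that the candidate relation $\Theta=\Theta'\cup\Theta''$ --- same-copy comparisons together with single-crossing zig-zags --- is already transitive and closed, hence \emph{is} the pushout pre-order; no subsequent chain-shortening is required. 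Without the order-embedding hypothesis the two-step relation need not be transitive and the characterisation would fail. As written, your proposal leaves the decisive justification open and replaces it with a heuristic that would not supply it.
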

\begin{proof}
Recall that $\les_S$ is co-transitive if, and only if, given a pushout square in $\PC$ as in the left-hand diagram below,
\[\begin{tikzcd}
X \arrow{r}{q_0} \arrow[swap]{d}{q_1} & S \arrow{d}{\lambda_1} \\
S \arrow[swap]{r}{\lambda_0} & P \arrow[ul, phantom, "\lrcorner", very near start]
\end{tikzcd}
 \ \ \ \ \ \ \ \ 
\begin{tikzcd}
{} & X+X \arrow[swap, two heads]{dl}{\binom{q_0}{q_1}} \arrow{dr}{\binom{\lambda_0\circ q_0}{\lambda_1\circ q_1}} & \\
S \arrow[dashed,swap]{rr}{t} & & P
\end{tikzcd}
\]
there is $t\colon S\to P$ making the right-hand diagram commute.
By Remark~\ref{r:bijection not needed surj}, such a $t$ exists precisely when, for every $(x,i),(y,j)\in X+X$, $(x,i)\les_S(y,j)$ implies $\binom{\lambda_0\circ q_0}{\lambda_1\circ q_1}(x,i)\leq \binom{\lambda_0\circ q_0}{\lambda_1\circ q_1}(y,j)$, i.e.\ $\lambda_i([(x,i)])\leq \lambda_j([(y,j)])$. Recall that $\les_S$ is co-reflexive provided $q_0$ and $q_1$ are both sections of a morphism $d\colon S\to X$. In particular, $q_0$ and $q_1$ are regular monomorphisms in $\PC$. Thus, by Remark~\ref{rm:pushouts}, $\lambda_i([(x,i)])\leq \lambda_j([(y,j)])$ if, and only if,
\begin{equation}\label{eq:preord-pushout}
[i=j \text{ and } (x,i)\les_S (y,j) ] \text{ or } [i\neq j \text{ and } \exists z\in X \text{ s.t.\ } (x,i)\les_S (z,j) \text{ and } (z,i)\les_S (y, j)].
\end{equation}
	
We conclude that $\les_S$ is co-reflexive if, and only if, equation~\eqref{eq:preord-pushout} holds whenever $(x,i)\les_S(y,j)$. In turn, this is equivalent to the condition in the statement of the lemma.
\end{proof}

From Lemmas~\ref{l:co-refl-symm} and~\ref{l:co-transitive}, we obtain the following characterisation of equivalence co-relations in $\PC$.
\begin{proposition}\label{p:co-equivalence}
The pre-order $\les_S$ is an equivalence co-relation on $X$ if, and only if,
\[
(x,i)\les_S (y,j) \ \Longrightarrow \ [x\leq y \text{ and } (x,i^*)\les_S (y,j^*)]
\]
and
\[\pushQED{\qed} 
(x,i)\les_S(y,i^*) \ \Longrightarrow \ \exists z\in X \ [(x,i)\les_S (z,i^*) \text{ and } (z,i)\les_S (y,i^*)].\qedhere
\popQED\]
\end{proposition}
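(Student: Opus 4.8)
The plan is to obtain Proposition~\ref{p:co-equivalence} directly from Lemmas~\ref{l:co-refl-symm} and~\ref{l:co-transitive}, since by construction $\les_S$ is an equivalence co-relation on $X$ precisely when the quotient object $\binom{q_0}{q_1}\colon X+X\epi S$ is simultaneously co-reflexive, co-symmetric and co-transitive; the proposition is then just a repackaging of the three characterisations already established.

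First I would unwind the definition: $\les_S$ is an equivalence co-relation if and only if it enjoys all three properties above. By Lemma~\ref{l:co-refl-symm}(1), co-reflexivity of $\les_S$ amounts to the implication $(x,i)\les_S(y,j)\Rightarrow x\leq y$, and by Lemma~\ref{l:co-refl-symm}(2) co-symmetry amounts to $(x,i)\les_S(y,j)\Rightarrow (x,i^*)\les_S(y,j^*)$. Since both are universally quantified implications with the same antecedent, their conjunction is exactly the first displayed implication in the statement, with the two-fold conclusion $[x\leq y$ and $(x,i^*)\les_S(y,j^*)]$.

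It then remains to account for co-transitivity. Here I would be careful about the order of the argument, since Lemma~\ref{l:co-transitive} presupposes co-reflexivity: once the first displayed condition is in force, co-reflexivity holds, so Lemma~\ref{l:co-transitive} applies and gives that co-transitivity is equivalent to the second displayed implication $(x,i)\les_S(y,i^*)\Rightarrow\exists z\in X\,[(x,i)\les_S(z,i^*)$ and $(z,i)\les_S(y,i^*)]$. Combining the two equivalences yields that $\les_S$ is an equivalence co-relation if and only if both displayed conditions hold.

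I do not expect a genuine obstacle here: the substantive content has already been extracted in the two lemmas, and what is left is bookkeeping. The one point that needs a moment's attention is precisely that Lemma~\ref{l:co-transitive} is conditional on co-reflexivity, so the three properties cannot be handled in complete independence; I would also remark that the lemma has already absorbed the ``$i=j$'' instances of co-transitivity into co-reflexivity, so the second displayed condition — which only concerns pairs with differing second components — is indeed all that is required.
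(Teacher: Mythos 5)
Your proposal is correct and follows exactly the paper's route: the paper presents Proposition~\ref{p:co-equivalence} as an immediate consequence of Lemmas~\ref{l:co-refl-symm} and~\ref{l:co-transitive}, with no further argument. Your additional care about the fact that Lemma~\ref{l:co-transitive} presupposes co-reflexivity (so the first displayed condition must be in force before the second can be identified with co-transitivity) is a correct and worthwhile observation that the paper leaves implicit.
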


\section{Proof of Theorem~\ref{th:effective}}\label{s:proof-of-main-res}
Assume $\binom{q_0}{q_1}\colon X+X\epi S$ is an equivalence co-relation on $X$. Dualising Definition~\ref{d:effective-exact}, we say that $\binom{q_0}{q_1}$ is \emph{effective} provided it coincides with the co-kernel pair of its equaliser. That is, provided the following is a pushout square in $\PC$,
\begin{equation}\label{eq:pushout-effective}
\begin{tikzcd}
Y \arrow[hookrightarrow]{r}{k} & X \arrow{d}{q_1} \\
X \arrow[hookleftarrow]{u}{k} \arrow[swap]{r}{q_0} & S
\end{tikzcd}\end{equation}
where $k$ is the equaliser of $q_0,q_1\colon X\rightrightarrows S$ in $\PC$. Also, we say that the pre-order $\les_S$ is effective if so is the corresponding quotient object.
By item~$1$ in Proposition~\ref{p:properties-of-PC}, the space $Y$ can be identified with a closed subset of $X$, equipped with the induced order and topology. Define the relation $\les^{Y}$ on $X+X$ as follows:
\begin{equation}\label{eq:preord-closed-sub}
(x,i)\les^{Y}(y,j) \ \Longleftrightarrow \ (i=j \text{ and } x\leq y) \text{ or } (i^*=j \text{ and } \exists z\in Y \text{ s.t.\ } x\leq z\leq y).
\end{equation}
\begin{lemma}\label{l:co-kernel-inclusion}
$\les^Y$ is the pre-order on $X+X$ associated with the pushout of the inclusion $Y\rmono X$ along itself.
\end{lemma}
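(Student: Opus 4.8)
The plan is to compute the pushout of the inclusion $k\colon Y\rmono X$ along itself directly, using the explicit description of pushouts of regular monomorphisms recorded in Remark~\ref{rm:pushouts}, and then to read off the associated pre-order on $X+X$ via Lemma~\ref{l:bijection Q E}. Concretely, let $\binom{\lambda_0 k}{\lambda_1 k}$-type maps $\lambda_0,\lambda_1\colon X\to P$ form the pushout of $k$ along $k$ in $\PreC$, and let $\refl P$ be its reflection in $\PC$; write $q_0 = \refl(\lambda_0)$, $q_1 = \refl(\lambda_1)$ for the resulting cospan in $\PC$. By Lemma~\ref{l:bijection Q E}, the pre-order on $X+X$ associated with the quotient object $\binom{q_0}{q_1}\colon X+X\epi \refl P$ is exactly $\{((x,i),(y,j))\mid \lambda_i(x) \leq \lambda_j(y) \text{ in } \refl P\}$, since passing to the reflection identifies points precisely along the symmetrisation of the pre-order and so does not change the induced relation on $X+X$. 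Thus it suffices to show that $\lambda_i(x)\leq\lambda_j(y)$ in $P$ if and only if the right-hand side of~\eqref{eq:preord-closed-sub} holds.

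The next step is to instantiate the formulas for $\Theta = \Theta'\cup\Theta''$ from Remark~\ref{rm:pushouts} in the special case $Y_0 = Y_1 = X$, $f_0 = f_1 = k\colon Y\rmono X$. Here $\Theta'$ contributes exactly the "same-copy" relation: $\lambda_i(x)\leq\lambda_i(x')$ whenever $x\leq_X x'$, matching the clause $(i=j$ and $x\leq y)$. For the cross-copy relation, $\Theta''$ says that $\lambda_{i^*}(w)\,\Theta''\,\lambda_i(w')$ holds iff there is $z\in Y$ with $w\leq_X k(z) = z$ and $k(z) = z\leq_X w'$; this is precisely the clause $(i^*=j$ and $\exists z\in Y$ with $x\leq z\leq y)$ after renaming. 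One should also check the two combined cases — e.g. a chain $\lambda_i(x)\leq \lambda_i(x'')\leq\lambda_{i^*}(x')$ via $\Theta'$ then $\Theta''$ — but by transitivity of $\leq_X$ these collapse back into the displayed two-clause description, so $\Theta$ agrees with $\les^Y$ as a set.

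The main obstacle, and the one point needing genuine care, is verifying that $P$ is already a compact pospace — i.e. that $\Theta$ is a partial order, not merely a pre-order — so that $\refl P = P$ and $q_0 = \lambda_0$, $q_1 = \lambda_1$ are already the pushout maps in $\PC$ without any further quotient. Antisymmetry amounts to checking that if $\lambda_i(x)\leq\lambda_j(y)$ and $\lambda_j(y)\leq\lambda_i(x)$ then $\lambda_i(x) = \lambda_j(y)$; when $i=j$ this is antisymmetry of $\leq_X$, and when $i\neq j$ one uses that $x\leq z\leq y$ and $y\leq z'\leq x$ for $z,z'\in Y$ forces $x = z = y \in Y$, whence $\lambda_0(x) = \lambda_1(x)$ by the pushout identification. (Closedness of $\Theta$, hence that $P$ is genuinely an object of $\PC$, is already guaranteed by the argument in Remark~\ref{rm:pushouts}, using that $\downset$ of a closed set is closed.) Once antisymmetry is in hand, $P$ is a compact pospace, the reflector acts trivially, and the computation of $\Theta = \les^Y$ above finishes the proof.
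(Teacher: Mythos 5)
Your proof is correct and takes the same route as the paper, which simply observes that the lemma is an immediate consequence of Remark~\ref{rm:pushouts}: you instantiate $\Theta=\Theta'\cup\Theta''$ with $Y_0=Y_1=X$ and $f_0=f_1=k\colon Y\rmono X$ (a regular mono, being an equaliser) and read off exactly the two clauses of~\eqref{eq:preord-closed-sub}. The antisymmetry verification in your last paragraph, while correct, is not needed: as you already note in the first paragraph, the pre-order induced on $X+X$ by the composite $X+X\to P\to\refl P$ coincides with the pullback of $\Theta$ regardless of whether $\refl$ acts trivially on $P$.
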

\begin{proof}
This is an immediate consequence of Remark~\ref{rm:pushouts}.
\end{proof}
For the next proposition, recall that ${\sim_S}={\les_S}\cap{\les_S^\op}$ is the symmetrization of the pre-order $\les_S$.
\begin{proposition}\label{p:effective-char}
The equivalence co-relation $\les_S$ is effective if, and only if, 
\[(x,i)\les_S (y,i^*) \ \Longrightarrow \  \exists z\in X \ [x\leq z\leq y \text{ and } (z,i)\sim_S (z,i^*)].\]
\end{proposition}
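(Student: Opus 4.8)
The plan is to recast effectiveness as an equality of two closed pre-orders on $X+X$, and then to verify that this equality is exactly the displayed condition. Write $k\colon Y\rmono X$ for the equaliser of $q_0,q_1\colon X\rightrightarrows S$ in $\PC$. Since limits in $\PC$ are computed in $\Set$ and $[(x,0)]=[(x,1)]$ precisely when $(x,0)\sim_S(x,1)$, I would identify $Y$ with the closed subset $\{x\in X\mid (x,i)\sim_S(x,i^*)\}$ of $X$ (as in item~$1$ of Proposition~\ref{p:properties-of-PC}). By Lemma~\ref{l:co-kernel-inclusion}, the co-kernel pair of $k$ is the quotient object of $X+X$ which, under the isomorphism of Lemma~\ref{l:bijection Q E}, corresponds to the pre-order $\les^{Y}$ of~\eqref{eq:preord-closed-sub}; with the above description of $Y$, the clause ``$\exists z\in Y$ s.t.\ $x\leq z\leq y$'' in~\eqref{eq:preord-closed-sub} reads ``$\exists z\in X$ s.t.\ $x\leq z\leq y$ and $(z,i)\sim_S(z,i^*)$''. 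As $\binom{q_0}{q_1}$ is effective exactly when, as a quotient object of $X+X$, it coincides with the co-kernel pair of $k$, Lemma~\ref{l:bijection Q E} shows that $\les_S$ is effective if and only if ${\les_S}={\les^{Y}}$ in $\Preo(X+X)$.

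Next I would observe that one of the two inclusions is automatic. If $(x,i)\les^{Y}(y,j)$ and $i=j$, then $x\leq y$, so $(x,i)\les_S(y,j)$ because ${\leq_{X+X}}\subseteq{\les_S}$; if $i^*=j$ and $x\leq z\leq y$ with $z\in Y$, then $(x,i)\les_S(z,i)$, $(z,i)\les_S(z,i^*)=(z,j)$ since $z\in Y$, and $(z,j)\les_S(y,j)$, so transitivity of $\les_S$ gives $(x,i)\les_S(y,j)$. Hence ${\les^{Y}}\subseteq{\les_S}$ unconditionally, and effectiveness of $\les_S$ is equivalent to the single inclusion ${\les_S}\subseteq{\les^{Y}}$.

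Finally I would check that ${\les_S}\subseteq{\les^{Y}}$ holds if and only if the displayed condition does. For the ``only if'' direction, apply the inclusion to a pair of the form $(x,i)\les_S(y,i^*)$ and read off~\eqref{eq:preord-closed-sub}: since the two summands differ, the defining clause of $\les^{Y}$ forces the existence of $z\in Y$, that is, of $z\in X$ with $x\leq z\leq y$ and $(z,i)\sim_S(z,i^*)$, which is the stated conclusion. For the ``if'' direction, take any $(x,i)\les_S(y,j)$: if $j=i$, co-reflexivity of $\les_S$ (item~$1$ of Lemma~\ref{l:co-refl-symm}) yields $x\leq y$, hence $(x,i)\les^{Y}(y,i)$; if $j=i^*$, the hypothesis provides $z\in X$ with $x\leq z\leq y$ and $(z,i)\sim_S(z,i^*)$, i.e.\ $z\in Y$, hence $(x,i)\les^{Y}(y,i^*)$. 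In either case $(x,i)\les^{Y}(y,j)$, so ${\les_S}\subseteq{\les^{Y}}$, completing the argument.

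I do not expect a genuine obstacle here: the substantive work has already been done in Remark~\ref{rm:pushouts} and in Lemmas~\ref{l:bijection Q E} and~\ref{l:co-kernel-inclusion}. The only step demanding care is the first one — translating ``effective'' into the equality ${\les_S}={\les^{Y}}$ of pre-orders, and in particular noticing that the inclusion ${\les^{Y}}\subseteq{\les_S}$ comes for free, so that only its converse, which is precisely the displayed condition modulo the trivial same-summand case, has to be analysed.
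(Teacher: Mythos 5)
Your proposal is correct and follows essentially the same route as the paper's own proof: translate effectiveness into the equality ${\les_S}={\les^{Y}}$ via Lemma~\ref{l:co-kernel-inclusion}, identify $Y$ with $\{x\in X\mid (x,i)\sim_S(x,i^*)\}$, show that the inclusion ${\les^{Y}}\subseteq{\les_S}$ holds automatically (using that $\les_S$ extends the coproduct order together with transitivity), and observe that the remaining inclusion, restricted to cross-summand pairs, is exactly the displayed condition. The only difference is that you spell out the reduction of the same-summand case slightly more explicitly, which is a matter of presentation rather than substance.
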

\begin{proof}
Recall that the equivalence co-relation $\les_S$ is effective if, and only if, the diagram in~\eqref{eq:pushout-effective} is a pushout in $\PC$. In turn, by Lemma~\ref{l:co-kernel-inclusion}, this is equivalent to saying that ${\les_S}={\les^Y}$.
Since $Y=\{x\in X\mid (x,i)\sim_S (x,i^*)\}$,
\begin{equation*}
(x,i)\les^{Y}(y,j) \ \Longleftrightarrow \ \exists z\in X \ [x\leq z\leq y \text{ and }  (z,i)\sim_S (z,i^*)].
\end{equation*}
Therefore, to settle the statement, it suffices to show that the inclusion ${\les^Y}\subseteq{\les_S}$ is always satisfied.

Note that any equivalence co-relation $\les$ on $X$ satisfies $(x,i)\les (y,i)$ if, and only if, $x\leq y$. The left-to-right implication follows from item~$1$ in Lemma~\ref{l:co-refl-symm}, while the right-to-left implication holds because $\les$ extends the coproduct order of $X+X$. Whence, $(x,i)\les^Y (y,i)$ if, and only if, $(x,i)\les_S (y,i)$. Suppose now $(x,i)\les^Y (y,i^*)$, and let $z\in Y$ satisfy $x\leq z\leq y$. We have
\[
(x,i)\les_S (z,i)\sim_S (z,i^*)\les_S (y,i^*),
\]
where the two inequalities hold because $\les_S$ extends the partial order of $X+X$. Therefore, ${\les^Y}\subseteq{\les_S}$.
\end{proof}

We can finally prove Theorem~\ref{th:effective}, stating that every equivalence relation in $\PCop$ is effective.
 \begin{proof}[Proof of Theorem~\ref{th:effective}]
 Let $(X,\leq)$ be a compact pospace and $\les$ an equivalence co-relation on $X$. In view of Proposition~\ref{p:effective-char} it is enough to show that, whenever $(x,i)\les (y,i^*)$, there is $z\in X$ such that 
 \[
 x\leq z\leq y \text{ and } (z,i)\sim (z,i^*).
 \]
Fix arbitrary $x,y\in X$ and $i\in \{0,1\}$ satisfying $(x,i)\les (y,i^*)$, and set
\[
\Omega=\{u\in X\mid (x,i)\les (u,i^*) \text{ and } (u,i)\les (y,i^*) \}.
\]
 The idea is to apply Zorn's Lemma to show that $\Omega$ has a maximal element $z$ satisfying the desired properties. 
 
 First, note that $\Omega$ is non-empty because $\les$ is co-transitive, cf.\ Lemma~\ref{l:co-transitive}. We claim that every non-empty chain $C\subseteq \Omega$ admits an upper bound in $\Omega$. 
 Every directed set in a compact pospace has a supremum, which coincides with the topological limit of the set regarded as a net \cite[Proposition~VI.1.3]{ContLatt}. Thus, $C$ has a supremum $s$ in $X$, which belongs to the topological closure $\overline{C}$ of $C$. 
 \begin{claim*}
 $\Omega$ is a closed subset of $X$.
 \end{claim*}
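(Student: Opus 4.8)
The plan is to show that $\Omega = \{u\in X \mid (x,i)\les(u,i^*) \text{ and } (u,i)\les(y,i^*)\}$ is closed by exhibiting it as a finite intersection of preimages of closed sets under continuous maps. Recall that $\les$, being an element of $\Preo(X+X)$, is a closed subset of $(X+X)\times(X+X)$; equivalently, for each fixed pair of ``sides'' the relevant slices of $\les$ are closed in $X\times X$. So the first step is to rewrite the two defining conditions of $\Omega$ as the requirement that certain points lie in closed sets. Concretely, the set $\{u\in X\mid (x,i)\les(u,i^*)\}$ is the preimage of $\les$ under the continuous map $X\to (X+X)\times(X+X)$ sending $u$ to $((x,i),(u,i^*))$, and similarly $\{u\in X\mid (u,i)\les(y,i^*)\}$ is the preimage of $\les$ under $u\mapsto ((u,i),(y,i^*))$. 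Both maps are continuous (the coproduct inclusions $X\to X+X$ are continuous, the diagonal-type maps are continuous, and the product of continuous maps is continuous), and $\les$ is closed, so each of these two sets is closed in $X$.

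The second step is simply to observe that $\Omega$ is the intersection of these two closed sets, hence closed, and $X$ being compact, $\Omega$ is compact. This completes the claim.

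I do not expect any genuine obstacle here: the content is entirely the fact that $\les$ is a closed relation together with continuity of the maps that ``plug in'' the fixed points $x$ and $y$. The only thing requiring a small amount of care is making the continuity of the maps $u\mapsto((x,i),(u,i^*))$ and $u\mapsto((u,i),(y,i^*))$ explicit — namely, that each is built from the (continuous) coproduct coprojections $X\rmono X+X$, constant maps, and the universal map into a product — but this is routine. Once the claim is in hand, the outer argument proceeds: $s=\sup C$ lies in $\overline{C}\subseteq\overline{\Omega}=\Omega$, so $s\in\Omega$ is an upper bound for $C$ in $\Omega$, Zorn's Lemma yields a maximal $z\in\Omega$, and one then argues (using co-transitivity applied to $(z,i)\les(y,i^*)$ and maximality, together with co-reflexivity to force $x\leq z\leq y$) that $(z,i)\sim(z,i^*)$.
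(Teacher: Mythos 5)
Your proof is correct and follows essentially the same route as the paper: both decompose $\Omega$ as the intersection of $\{u\mid (x,i)\les(u,i^*)\}$ and $\{u\mid (u,i)\les(y,i^*)\}$ and deduce closedness of each from the closedness of $\les$ together with continuity of the relevant maps. The only cosmetic difference is that the paper factors the argument through the closed up-set $\upset(x,i)$ in $X+X$ before pulling back along the coproduct injection, whereas you pull back $\les$ directly; these are the same computation.
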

 \begin{proof}
 The set $\Omega$ can be written as the intersection of the sets
 \[
 \Omega_1= \{u\in X\mid (x,i)\les (u,i^*) \} \text{ and } \Omega_2=\{u\in X\mid (u,i)\les (y,i^*) \}.
 \]
 Hence, it is enough to show that $\Omega_1$ and $\Omega_2$ are closed in $X$. We show that $\Omega_1$ is closed. The proof for $\Omega_2$ is the same, mutatis mutandis.
The set $\Omega_1$ is the preimage, under the coproduct injection $\iota_{i^*}\colon X\to X+X$, of 
 \[
 \upset(x,i)=\{(w,j)\in X+X\mid (x,i)\les (w,j) \}.
 \]
Since $\les$ is a closed pre-order on $X+X$, the set $\upset(x,i)$ is closed in $X+X$ \cite[Proposition~1]{Nachbin}. Therefore, its preimage $\Omega_1$ is closed in $X$.
 \end{proof}
 The previous claim entails that $s\in\overline{C}\subseteq\Omega$, i.e.\ $C$ has a supremum in $\Omega$. Hence, every non-empty chain in $\Omega$ admits an upper bound. By Zorn's Lemma, $\Omega$ has a maximal element $z$. By co-reflexivity of $\les$ (see item~$1$ in Lemma~\ref{l:co-refl-symm}), $(x,i)\les (z,i^*)$ and $(z,i)\les (y,i^*)$ imply $x\leq z\leq y$. 
 It remains to show that $(z,i)\sim (z,i^*)$.
 
 Since $(z,i)\les (y,i^*)$, by co-transitivity of $\les$ (cf.\ Lemma~\ref{l:co-transitive}), there is $u\in X$ such that $(z,i)\les (u,i^*)$ and $(u,i)\les (y,i^*)$. Also, $(x,i)\les(z,i)$ because $\les$ extends the partial order of $X$. Thus $(x,i)\les (z,i)\les (u,i^*)$, which implies $u\in \Omega$. By co-reflexivity, $(z,i)\les (u,i^*)$ entails $z\leq u$. Since $z$ is maximal, it must be $z=u$. Therefore, $(z,i)\les (z,i^*)$. By co-symmetry (see item~$2$ in Lemma~\ref{l:co-refl-symm}), we conclude that $(z,i)\sim (z,i^*)$.
 \end{proof}
 
 We saw that, for every compact pospace $X$ and closed subset $Y\subseteq X$, there is a pre-order $\les^Y$ on $X+X$ given as in~\eqref{eq:preord-closed-sub}. In fact, by Lemma~\ref{l:co-kernel-inclusion}, $\les^Y$ is the equivalence co-relation on $X$ associated with the pushout of the inclusion $Y\rmono X$ along itself.
 Conversely, every equivalence co-relation $\les$ on $X$ yields a closed subset of $X$, namely
 \[
 \fix(\les)=\{x\in X\mid (x,i)\sim (x,i^*)\}.
 \]
 \begin{corollary}\label{c:bijection congruences}
 For every compact pospace $X$, the assignments
 \[ {\les} \ \mapsto \ {\fix(\les)} \ \ \text{ and } \ \ {(Y\rmono X)} \ \mapsto \ {\les^Y}\]
yield an isomorphism between the poset of equivalence co-relations on $X$ and the poset of closed subsets of $X$.
 \end{corollary}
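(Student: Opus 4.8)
The plan is to verify that the two displayed assignments are mutually inverse and monotone for the natural orderings (say, inclusion of relations on $X+X$ for equivalence co-relations, and inclusion for closed subsets), so that together they constitute a poset isomorphism. First I would check that both maps are well defined. That $\fix(\les)$ is a closed subset of $X$ was already observed just before the statement; concretely, $\{x\in X\mid (x,i)\les(x,i^*)\}$ is the preimage of the closed set ${\les}\subseteq(X+X)\times(X+X)$ under the continuous map $x\mapsto((x,i),(x,i^*))$, and symmetrically for the reversed inequality, so $\fix(\les)$ is an intersection of two closed sets. That $\les^Y$ is an equivalence co-relation on $X$ is immediate from Lemma~\ref{l:co-kernel-inclusion}: it is the pre-order associated with the pushout of the inclusion $Y\rmono X$ along itself, i.e.\ with the co-kernel pair of $Y\rmono X$ in $\PC$; since a co-kernel pair in $\PC$ is dually a kernel pair in $\PCop$, and kernel pairs are always (internal) equivalence relations, $\les^Y$ is an equivalence co-relation. (Alternatively, one can simply check the two conditions of Proposition~\ref{p:co-equivalence} by hand.)

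Next I would show that $\les^{\fix(\les)}={\les}$ for every equivalence co-relation $\les$ on $X$. This is precisely where Theorem~\ref{th:effective} enters, and it is the only non-routine step. By the proof of Proposition~\ref{p:effective-char}, an equivalence co-relation $\les$ is effective exactly when it coincides with $\les^Y$ for the closed set $Y=\{x\in X\mid (x,i)\sim(x,i^*)\}=\fix(\les)$. Theorem~\ref{th:effective} asserts that every equivalence relation in $\PCop$ is effective, hence so is every equivalence co-relation on $X$; therefore ${\les}=\les^{\fix(\les)}$.

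For the reverse composite I would compute $\fix(\les^Y)=Y$ directly from~\eqref{eq:preord-closed-sub}. For $x\in X$, the relation $(x,i)\les^Y(x,i^*)$ holds iff there is $z\in Y$ with $x\leq z\leq x$, which by antisymmetry of $\leq$ happens iff $x\in Y$; the same computation gives $(x,i^*)\les^Y(x,i)$ iff $x\in Y$. Hence $(x,i)\sim^Y(x,i^*)$ iff $x\in Y$, i.e.\ $\fix(\les^Y)=Y$. Finally, monotonicity is routine: $Y_1\subseteq Y_2$ visibly forces $\les^{Y_1}\subseteq\les^{Y_2}$ by~\eqref{eq:preord-closed-sub}, while ${\les_1}\subseteq{\les_2}$ forces ${\sim_1}\subseteq{\sim_2}$ and hence $\fix(\les_1)\subseteq\fix(\les_2)$. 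Combined with the fact that the two maps are mutual inverses, this yields the asserted poset isomorphism. The main obstacle — recovering an equivalence co-relation from its fixed-point set — has already been handled in Theorem~\ref{th:effective}; everything else is a direct unwinding of the definitions.
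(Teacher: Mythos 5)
Your proposal is correct and follows essentially the same route as the paper: compute $\fix(\les^Y)=Y$ directly from~\eqref{eq:preord-closed-sub}, and derive ${\les}={\les^{\fix(\les)}}$ from Theorem~\ref{th:effective} together with (the proof of) Proposition~\ref{p:effective-char}, with monotonicity checked routinely. The only addition is your explicit well-definedness check (in particular the observation that $\les^Y$ is an equivalence co-relation because co-kernel pairs are dually kernel pairs), which the paper instead asserts in the surrounding text.
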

\begin{proof}
 The two maps are clearly monotone. For any closed subset $Y\seq X$, we have $\fix(\les^Y)=Y$ because
 \[
 (x,i)\sim^Y(x,i^*) \ \Longleftrightarrow \ \exists z\in Y \text{ s.t.\ } x\leq z\leq x \ \Longleftrightarrow \ x\in Y.
 \]
Moreover, it follows at once from Theorem~\ref{th:effective} and Proposition~\ref{p:effective-char} that, for any equivalence co-relation $\les$ on $X$, ${\les}\subseteq{\les^{\fix(\les)}}$. For the converse inclusion, see the proof of Proposition~\ref{p:effective-char}.
\end{proof}

 \section{Epilogue: negative axiomatisability results}\label{s:epilogue}
In the previous sections, we have given a direct proof of the fact that the category $\PC$ of compact ordered spaces is dually equivalent to an $\aleph_1$-ary variety of algebras. One may wonder whether it is necessary to resort to infinitary operations. In this section we show that $\PCop$ is not equivalent to any $\SP$-class of finitary algebras (i.e., one closed under subalgebras and Cartesian products), let alone a finitary (quasi-)variety.

Recall that an object $A$ of a category $\C$ is (\emph{Gabriel-Ulmer}) \emph{finitely presentable} if the covariant hom-functor $\hom_{\C}(A,-)\colon\C\to\Set$ preserves directed colimits. See \cite[Definition~6.1]{GabUlm} or \cite[Definition~1.1]{AdaRos}. Further, $\C$  is \emph{finitely accessible} provided it has directed colimits and there exists a set $S$ of its objects such that \emph{(i)} each object of $S$ is finitely presentable, and \emph{(ii)} each object of $\C$ is a directed colimit of objects in $S$. See \cite[Definition~2.1]{AdaRos}.  
For example, finitary varieties and finitary quasi-varieties (with homomorphisms) are finitely accessible categories, cf.\ \cite[Corollary~3.7 and Theorem~3.24]{AdaRos}.
Recall the following definition:
\begin{definition}\label{def:Priestley-space}
A \emph{Priestley space} is a compact pospace $(X,\leq)$ which is \emph{totally order-disconnected}, i.e.\ for all $x,y$ with $x\not\leq y$ there is a clopen $C\subseteq X$ which is an up-set for $\leq$ and satisfies $x\in C$ but $y\notin C$. 
\end{definition}
\begin{lemma}\label{l:pro-completion-posfin}
A compact pospace is a Priestley space if, and only if, it is the codirected limit in $\PC$ of finite posets equipped with the discrete topologies.
\end{lemma}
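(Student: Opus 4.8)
The plan is to prove both implications directly, using the standard fact that finite posets with the discrete topology are Priestley spaces and that $\PC$ has codirected limits (computed as in $\KH$, i.e.\ set-theoretically). For the ``if'' direction, suppose $X = \varprojlim_{i} X_i$ is a codirected limit of finite discrete posets in $\PC$, with limit projections $\pi_i \colon X \to X_i$. Each $X_i$, being finite and discrete, is trivially totally order-disconnected: given $a \not\leq b$ in $X_i$, the set $\upset a$ is clopen (everything is), an up-set, contains $a$, and omits $b$. Now given $x \not\leq y$ in $X$, I would argue that $\pi_i(x) \not\leq \pi_i(y)$ for some $i$: indeed, the order on $X$ is the intersection of the pullbacks of the orders on the $X_i$ (since limits in $\PC$ are computed in $\KH$, hence underlying sets, and the closed order on the limit is the one making all projections monotone, which by a standard argument for codirected limits of compact pospaces is exactly $\bigcap_i (\pi_i \times \pi_i)^{-1}(\leq_{X_i})$). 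Hence $x \not\leq y$ forces $\pi_i(x)\not\leq\pi_i(y)$ for some $i$; pulling back a separating clopen up-set in $X_i$ along the continuous monotone map $\pi_i$ yields a clopen up-set in $X$ separating $x$ from $y$. So $X$ is a Priestley space.

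For the ``only if'' direction, let $X$ be a Priestley space. The natural index category is the poset of finite partitions of $X$ into clopen up-set-generated pieces, or more cleanly: consider the family of all continuous monotone surjections $q\colon X \epi F$ onto finite discrete posets $F$, and show (a) this family, suitably ordered by factorisation, is codirected, and (b) the canonical map $X \to \varprojlim F$ is an isomorphism in $\PC$. For codirectedness, given two such quotients $q_1\colon X\epi F_1$ and $q_2 \colon X \epi F_2$, the map $\langle q_1, q_2\rangle \colon X \to F_1\times F_2$ factors through its image, which is a finite poset with the discrete topology, giving a common refinement. For (b): the canonical map $e\colon X \to \varprojlim F$ is a morphism in $\PC$; it is injective and reflects the order precisely because total order-disconnectedness supplies, for each pair $x \not\leq y$, a clopen up-set $C$, and the two-element poset $\mathbf 2$ together with the characteristic map $\chi_C\colon X \to \mathbf 2$ (continuous, monotone, and — after replacing $\mathbf 2$ by the image — surjective) separates them; so $e$ is a continuous order-embedding, i.e.\ a regular mono by item~$1$ of Proposition~\ref{p:properties-of-PC}. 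Finally $e$ is surjective: this is a compactness argument — a point of $\varprojlim F$ is a compatible family of non-empty closed (indeed clopen) ``fibres'' in $X$, and codirectedness makes these fibres a centered family of closed sets in the compact space $X$, so their intersection is non-empty and maps to the given point. A surjective continuous order-embedding between compact pospaces is an isomorphism, so $X \cong \varprojlim F$.

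The main obstacle I anticipate is not any single deep step but rather getting the bookkeeping right for the index category and the description of the order on a codirected limit in $\PC$: one must be careful that the limit pospace really does carry the intersection order $\bigcap_i (\pi_i\times\pi_i)^{-1}(\leq_{X_i})$ (this uses that $\KH$ is where limits live, plus closedness of each pulled-back order and the fact that a codirected intersection of closed orders on a compact space is again a closed partial order — antisymmetry needing the separation just established). Alternatively, and perhaps more efficiently, one can bypass the hands-on limit computation by invoking that this is precisely the classical Priestley-space characterisation (each Priestley space is the inverse limit of its finite continuous monotone quotients; see, e.g., the standard references on Priestley duality), translating it into $\PC$ via the already-noted facts about limits and regular monos in $\PC$. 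I would present the direct argument but cite the classical result as the source of the idea.
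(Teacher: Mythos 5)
Your proof is correct, but note that the paper does not actually prove this lemma: it declares the result folklore and cites \cite[Corollary~VI.3.3]{Johnstone1986}, so what you have written is a self-contained version of the standard argument that the paper delegates to the literature. Both directions are sound: the key structural fact you rely on --- that a (co)directed limit in $\PC$ is the limit of the underlying spaces in $\KH$ carrying the initial order $\bigcap_i(\pi_i\times\pi_i)^{-1}(\leq_{X_i})$ --- follows from the paper's observations that $\PreC\to\KH$ is topological and that $\PC$ is reflective (hence closed under limits) in $\PreC$. One small remark: your parenthetical worry that antisymmetry of the limit order ``needs the separation just established'' is misplaced and slightly circular-sounding; antisymmetry is automatic, since the limit is a subspace of the product and a point is determined by its projections, so $\pi_i(x)=\pi_i(y)$ for all $i$ already forces $x=y$. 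With that caveat removed, the bookkeeping you were concerned about goes through exactly as you describe, and the cofiltered-ness of the system of finite continuous monotone quotients plus the compactness argument for surjectivity of the comparison map are the standard ingredients of the cited folklore proof.
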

\begin{proof}
This result is folklore. For a proof see, e.g., \cite[Corollary~VI.3.3]{Johnstone1986}.
\end{proof}
Denote by $\Pr$ the full subcategory of $\PC$ defined by all Priestley spaces.
 By a result of Priestley~\cite{Priestley1970}, $\Pr^\op$ is equivalent to the category of bounded distributive lattices with homomorphisms. In particular, it is finitely accessible. The following result is an adaptation of \cite[Proposition~1.2]{MarraReggio2017} to the ordered case.
 \begin{theorem}\label{t:finitely accessible}
Let $\F$ be a full subcategory of $\PC$ extending $\Pr$. If $\F^\op$ is a finitely accessible category --- in particular, if $\F^\op$ is a finitary variety or a finitary quasi-variety --- then $\F= \Pr$.
 \end{theorem}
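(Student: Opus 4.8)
The plan is to show that if $\F \supseteq \Pr$ is a full subcategory of $\PC$ with $\F^\op$ finitely accessible, then every object of $\F$ is already a Priestley space, forcing $\F = \Pr$. The idea, following the ordered analogue of \cite[Proposition~1.2]{MarraReggio2017}, is to exploit the interplay between the finitely presentable objects of $\F^\op$ and the structure of $\PC$. First I would identify which objects of $\F^\op$ are finitely presentable. Since $\Pr^\op$ is (dually) the category of bounded distributive lattices, the finitely presentable objects there include the finite distributive lattices, whose dual Priestley spaces are exactly the finite posets with discrete topology. The crucial point is that these finite posets remain finitely presentable in $\F^\op$: I would argue that a finite discrete poset $F$, viewed in $\PC$, is such that $\hom_{\PC}(-, F)$ sends codirected limits in $\PC$ to directed colimits of sets — this is essentially because a continuous monotone map from a codirected limit $\varprojlim X_\alpha$ to a finite discrete space factors through some $X_\alpha$ (a compactness argument, using that $F$ is finite discrete so its clopen up-sets are finitely many), and such factorisations are essentially unique in the colimit. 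Hence finite posets are finitely presentable objects of $\F^\op$.

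Next I would use the defining property of finite accessibility: there is a set $S$ of finitely presentable objects of $\F^\op$ such that every object of $\F^\op$ is a directed colimit of objects from $S$; dually, every object $X$ of $\F$ is a codirected limit in $\F$ of finitely presentable objects of $\F^\op$. The key step is then to show that every finitely presentable object of $\F^\op$ is (dual to) a Priestley space — in fact a finite poset. For this I would take an arbitrary finitely presentable $A$ in $\F^\op$, i.e. an object $X_A$ of $\F$ with $\hom_{\PC}(X_A, -)$... no, rather $\hom_{\F^\op}(A,-)$ preserving directed colimits; dualising, $\hom_{\PC}(-, X_A)$ sends codirected limits in $\F$ to directed colimits. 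By Lemma~\ref{l:pro-completion-posfin}, I would like to realise $X_A$ itself as a codirected limit of finite posets. This is where $\Pr \subseteq \F$ matters: one can always build a Priestley reflection or approximation. Concretely, every compact pospace $X$ admits a canonical map to a codirected limit of finite posets — the "Priestley-isation" — but in general this is not an isomorphism; however, if $X = X_A$ is finitely presentable in $\F^\op$ and all the finite posets lie in $\F$, then the identity on $X_A$, regarded as a cone, factors through one of the finite approximating posets, yielding a retraction $X_A \to F \to X_A$. A retract (in $\PC$) of a finite discrete poset is again a finite discrete poset, so $X_A$ is a finite poset, hence a Priestley space.

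Finally, I would assemble the conclusion: every object $X$ of $\F$ is, by finite accessibility of $\F^\op$, a codirected limit in $\F$ of finitely presentable objects of $\F^\op$; by the previous paragraph each such object is a finite poset with the discrete topology; and a codirected limit in $\F$ of finite posets is computed in $\PC$ (limits in $\F$, being a full subcategory closed under the relevant limits, agree with those in $\PC$), so by Lemma~\ref{l:pro-completion-posfin} it is a Priestley space. Therefore $X \in \Pr$, and since $\Pr \subseteq \F$ by hypothesis, $\F = \Pr$. The last sentence of the statement is then immediate from the fact that finitary varieties and finitary quasi-varieties are finitely accessible, as recalled before the theorem.

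The main obstacle I anticipate is the retract argument in the second paragraph: one must show carefully that a finitely presentable object $X_A$ of $\F^\op$ is a retract of a finite poset. This requires (a) knowing that $X_A$ is a codirected limit, in $\F$, of finite posets — which uses $\Pr \subseteq \F$ together with the fact that the codirected system of finite quotients of $X_A$ in $\PC$ actually lands in $\F$ and has limit $X_A$ only when $X_A$ is already Priestley, so one instead must use the canonical cone from $X_A$ to its Priestley-isation and check it is the limit cone in the relevant sense — and (b) transferring finite presentability across the dualisation $\F \leftrightarrow \F^\op$ cleanly, keeping track of which colimits/limits are preserved. A secondary subtlety is verifying that finite discrete posets are genuinely finitely presentable in $\F^\op$ and not merely in $\Pr^\op$; this needs the compactness argument that continuous monotone maps out of codirected limits into finite spaces factor through a stage, which should be standard but must be stated.
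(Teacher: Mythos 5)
Your overall architecture matches the paper's: show that the finitely copresentable objects of $\F$ are finite posets, then use finite accessibility of $\F^\op$ plus Lemma~\ref{l:pro-completion-posfin} and reflection of limits along the full embedding $\F\to\PC$ to conclude that every object of $\F$ is Priestley. Your third paragraph and the closing remark about finitary (quasi-)varieties are fine, and your first paragraph (finite discrete posets are finitely copresentable) is true but not actually needed: finite accessibility only requires that \emph{some} set of finitely copresentable objects generates under codirected limits, and once you know all finitely copresentable objects are finite posets you are done, whatever that set is.

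The genuine gap is exactly the one you flag as ``the main obstacle'' and do not close: your retract argument requires the identity cone on a finitely copresentable $X_A$ to factor through a stage of a codirected diagram of finite posets \emph{whose limit is $X_A$ itself}. But the codirected system of finite poset quotients of $X_A$ has limit the Priestley-isation of $X_A$, not $X_A$; these agree precisely when $X_A$ is already a Priestley space, which is what you are trying to prove. Finite copresentability gives you factorisations of morphisms \emph{out of a codirected limit} into $X_A$, and the canonical comparison map goes the wrong way ($X_A\to\varprojlim F_i$), so no retraction $X_A\to F\to X_A$ is produced. The paper's device, which is the missing idea, is to sidestep writing $X_A$ as a limit altogether: choose a continuous monotone surjection $\gamma\colon Y\epi X_A$ with $Y$ a Priestley space (e.g.\ $Y=\beta|X_A|$ with the discrete order and $\gamma$ the extension of the identity on the underlying set). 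Then $Y$ genuinely is a codirected limit of finite posets, this limit lives in $\F$ because $\Pr\subseteq\F$ and the embedding reflects limits, and finite copresentability of $X_A$ applied to $\gamma$ yields a factorisation $\gamma=\phi\circ\alpha_j$ through a finite poset $Y_j$; surjectivity of $\gamma$ forces $\phi$ to be surjective, so $X_A$ is finite. With that substitution your proof goes through.
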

 \begin{proof}
 It suffices to show that every object in $\F$ is a Priestley space.
 We claim that every finitely copresentable object in $\F$ (i.e.\ one which is finitely presentable when regarded as an object of $\F^\op$) is finite. 

Let $(X,\leq)$ be an arbitrary finitely copresentable object in $\F$. Consider an epimorphism $\gamma\colon Y\epi X$ in $\PC$ with $Y$ a Priestley space. (E.g., let $Y=\beta|X|$ be the \v{C}ech-Stone compactification of the underlying set of $X$ equipped with the discrete topology, and $\gamma\colon (\beta|X|,=)\to (X,\leq)$ the unique continuous extension of the identity function $|X|\to |X|$). 
By Lemma~\ref{l:pro-completion-posfin}, $Y$ is the codirected limit in $\PC$ of finite posets $\{Y_i\}_{i\in I}$ with the discrete topologies. Denote by $\alpha_i\colon Y \to Y_i$ the $i$-th limit arrow. Since $Y$ lies in $\F$, and the full embedding $\F \to \PC$ reflects limits, $Y$ is in fact the codirected limit of $\{Y_i\}_{i\in I}$ in $\F$.
 \[
 \begin{tikzcd}
 Y \arrow[twoheadrightarrow]{r}{\gamma} \arrow{d}[swap]{\alpha_j} & X \\
 Y_j \arrow{ur}[swap]{\phi}
 \end{tikzcd}
 \]
The object $X$ being finitely copresentable in $\F$, there are $j\in I$ and a morphism $\phi\colon Y_j \to X$ such that $\gamma=\phi\circ \alpha_j$. The map $\gamma$ is surjective, hence so is $\phi$. This shows that $X$ is finite, and thus the claim is settled.
 
Since $\F^\op$ is finitely accessible, every object of $\F$ is the codirected limit of finitely copresentable objects. Using again the fact that the full embedding $\F \to \PC$ reflects limits, we deduce from Lemma~\ref{l:pro-completion-posfin} that every object of $\F$ is a Priestley space, as was to be shown.
Finally, we have already observed that finitary varieties and finitary quasi-varieties are finitely accessible categories.
 \end{proof}

 \begin{corollary}
 $\PCop$ is not equivalent to any $\SP$-class of finitary algebras.
 \end{corollary}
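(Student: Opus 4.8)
The plan is to argue by contradiction, the crux being to upgrade any ``$\SP$-class'' presentation of $\PCop$ to a \emph{finitary variety} by exploiting Theorem~\ref{th:effective}, after which Theorem~\ref{t:finitely accessible} yields the contradiction. So suppose $\PCop$ is equivalent to a class $\mathcal{A}$ of algebras over some finitary signature $\Sigma$ which is closed under subalgebras and Cartesian products. Since products in $\mathsf{Alg}(\Sigma)$ are computed set-wise and equalisers are carried by subalgebras, the full subcategory $\mathcal{A}\seq\mathsf{Alg}(\Sigma)$ is closed under all limits; in particular finite limits in $\mathcal{A}$ are those of $\mathsf{Alg}(\Sigma)$, so the internal equivalence relations on an object $A\in\mathcal{A}$ are precisely the congruences on $A$. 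Moreover $\mathcal{A}\simeq\PCop$ is cocomplete (Section~\ref{s:compact-pospaces}) and regular (Corollary~\ref{cor:PCop-quasi-variety}), and by Theorem~\ref{th:effective} every equivalence relation in it is effective; hence $\mathcal{A}$ is an exact category.

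Next I would show that $\mathcal{A}$ is closed under homomorphic images in $\mathsf{Alg}(\Sigma)$. First, every regular epimorphism $c\colon A\epi C$ in $\mathcal{A}$ is a surjective homomorphism: $c$ is the coequaliser of its kernel pair $\ker c\rightrightarrows A$ (a limit, hence computed as in $\mathsf{Alg}(\Sigma)$), and the quotient map $A\epi c(A)$ onto the image subalgebra $c(A)\seq C$ --- which lies in $\mathcal{A}$ because $\mathcal{A}$ is closed under subalgebras --- is readily seen to have the same universal property, so $c$ may be identified with $A\epi c(A)$. Now take $A\in\mathcal{A}$ and a congruence $\theta$ on $A$. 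As a subalgebra of $A\times A$, $\theta$ lies in $\mathcal{A}$, and $\theta\mono A\times A$ is an internal equivalence relation on $A$ (reflexivity, symmetry and transitivity being witnessed by the homomorphisms $a\mapsto(a,a)$, $(a,a')\mapsto(a',a)$ and $((a,b),(b,c))\mapsto(a,c)$). By exactness this relation is effective: if $c\colon A\to C$ denotes its coequaliser in $\mathcal{A}$, then $\theta$ is the kernel pair of $c$. Since $c$ is a surjective homomorphism with kernel $\theta$, we get $A/\theta\cong C\in\mathcal{A}$. As $A$ and $\theta$ were arbitrary, $\mathcal{A}$ is closed under homomorphic images.

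Being closed under homomorphic images, subalgebras and Cartesian products in $\mathsf{Alg}(\Sigma)$, the class $\mathcal{A}$ is a finitary variety by Birkhoff's HSP theorem. Applying Theorem~\ref{t:finitely accessible} with $\F=\PC$ --- so that $\F^\op=\PCop\simeq\mathcal{A}$ is a finitary variety, in particular a finitely accessible category --- we conclude that $\PC=\Pr$, i.e.\ every compact pospace is a Priestley space. This is absurd: the unit interval $\int$ has only $\emptyset$ and $\int$ as clopen subsets, so it is not totally order-disconnected, hence not a Priestley space. The resulting contradiction proves that $\PCop$ is not equivalent to any $\SP$-class of finitary algebras.

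The step I expect to be the real work is the passage from exactness to closure under homomorphic images, and specifically the identification of the coequaliser of a congruence $\theta$ in $\mathcal{A}$ with the genuine quotient $A/\theta$. This rests on the two structural facts isolated above --- that finite limits in $\mathcal{A}$ are inherited from $\mathsf{Alg}(\Sigma)$ (so internal equivalence relations are congruences) and that regular epimorphisms in $\mathcal{A}$ are surjective --- after which the conclusion follows by unwinding the definition of effectiveness and invoking Birkhoff's theorem.
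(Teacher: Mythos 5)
Your argument is correct and follows the same route as the paper: combine Theorem~\ref{th:effective} with the fact that an $\SP$-class of finitary algebras in which all equivalence relations are effective is a variety, then invoke Theorem~\ref{t:finitely accessible}. The only difference is that you prove this intermediate fact from scratch (via closure under homomorphic images and Birkhoff's theorem), whereas the paper simply cites it as an observation of Banaschewski.
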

 \begin{proof}
 By Theorem~\ref{th:effective}, every equivalence relation in $\PCop$ is effective. In turn, Banaschewski observed in~\cite{Ban-var} that every $\SP$-class of finitary algebras in which every equivalence relation is effective is a variety of algebras. The statement then follows from Theorem~\ref{t:finitely accessible}.
 \end{proof}
 
 \begin{remark}
In a recent work, Lieberman, Rosick{\' y} and Vasey \cite{LRVpreprint} proved that the opposite of the category $\KH$ of compact Hausdorff spaces is not equivalent to any elementary class of structures, with morphisms all the homomorphisms. In fact, they show that there exists no faithful functor $\KH^\op\to\Set$ which preserves directed colimits. Since directed colimits in elementary classes are concrete~\cite{Richter1971}, the preceding statement follows. This implies that $\PCop$ is not equivalent to any elementary class of structures. Indeed, note that the embedding $\Delta\colon \KH^\op\to\PCop$ (cf.\ equation~\eqref{eq:adjoint-Lambda}) preserves directed colimits. Hence, if there were a faithful functor $F\colon \PCop\to \Set$ preserving directed colimits, the composition $F\circ \Delta\colon \KH^\op\to \Set$ would also be a faithful functor preserving directed colimits, contradicting the aforementioned result. This shows that $\PCop$ cannot be equivalent to an elementary class of structures with morphisms all the homomorphisms.
 \end{remark}

\noindent\textbf{Acknowledgements.} 
The first-named author expresses his gratitude to J.\ Somaglia for some helpful discussions on convergence of nets.
The second-named author is grateful to the Institute of Computer Science of the Czech Academy of Sciences and the Mathematical Institute of the University of Bern, where this work was carried out, for providing a highly stimulating environment. Finally, we are most grateful to the anonymous referee for their comments, which helped us to improve the presentation of our results.

\bibliographystyle{plain}

\begin{thebibliography}{10}
  
\bibitem{Abbadini2019}
M.~Abbadini.
\newblock The dual of compact ordered spaces is a variety.
\newblock {\em Theory Appl. Categ.}, 34:1401--1439, 2019.

\bibitem{Adamek2004}
J.~Ad\'{a}mek.
\newblock On quasivarieties and varieties as categories.
\newblock {\em Studia Logica}, 78(1-2):7--33, 2004.

\bibitem{AdaRos}
J.~Ad\'{a}mek and J.~Rosick\'{y}.
\newblock {\em Locally presentable and accessible categories}, volume 189 of
  {\em London Mathematical Society Lecture Note Series}.
\newblock Cambridge University Press, Cambridge, 1994.

\bibitem{Ban-var}
B.~Banaschewski.
\newblock On categories of algebras equivalent to a variety.
\newblock {\em Algebra Universalis}, 16(2):264--267, 1983.

\bibitem{BGvO}
M.~Barr, P.~A.~Grillet, and D.~van Osdol.
\newblock {\em Exact categories and categories of sheaves}, volume 236 of
  {\em Lecture Notes in Mathematics}.
\newblock Springer-Verlag, Berlin, 1971.

\bibitem{Borceux-vol2}
F.~Borceux.
\newblock {\em Handbook of categorical algebra 2}, volume~51 of {\em
  Encyclopedia of Mathematics and its Applications}.
\newblock Cambridge University Press, Cambridge, 1994.

\bibitem{cdm2000}
R.~L.~O.~Cignoli, I.~M.~L.~D'Ottaviano, and D.~Mundici.
\newblock {\em Algebraic foundations of many-valued reasoning}, volume~7 of
  {\em Trends in Logic---Studia Logica Library}.
\newblock Kluwer Academic Publishers, Dordrecht, 2000.

\bibitem{Duskin1969}
J.~Duskin.
\newblock Variations on {B}eck's tripleability criterion.
\newblock In S.~Mac~Lane, editor, {\em Reports of the {M}idwest {C}ategory
  {S}eminar, {III}}, pages 74--129. Springer, Berlin, 1969.

\bibitem{GabUlm}
P.~Gabriel and F.~Ulmer.
\newblock {\em Lokal pr\"{a}sentierbare {K}ategorien}.
\newblock Lecture Notes in Mathematics, Vol. 221. Springer-Verlag, Berlin-New
  York, 1971.

\bibitem{GN1943}
I.~Gelfand and M.~Neumark.
\newblock On the imbedding of normed rings into the ring of operators in
  {H}ilbert space.
\newblock {\em Rec. Math. [Mat. Sbornik] N.S.}, 12(54):197--213, 1943.

\bibitem{ContLatt}
G.~Gierz, K.~H.~Hofmann, K.~Keimel, J.~D.~Lawson, M.~W.~Mislove, and D.~S.~Scott.
\newblock {\em A compendium of continuous lattices}.
\newblock Springer-Verlag, Berlin-New York, 1980.

\bibitem{HNN2018}
D.~Hofmann, R.~Neves, and P.~Nora.
\newblock Generating the algebraic theory of {$C(X)$}: the case of partially
  ordered compact spaces.
\newblock {\em Theory Appl. Categ.}, 33:276--295, 2018.

\bibitem{Isbell1982}
J.~Isbell.
\newblock Generating the algebraic theory of {$C(X)$}.
\newblock {\em Algebra Universalis}, 15(2):153--155, 1982.

\bibitem{Johnstone1986}
P.~T.~Johnstone.
\newblock {\em Stone spaces}, volume~3 of {\em Cambridge Studies in Advanced
  Mathematics}.
\newblock Cambridge University Press, Cambridge, 1986.

\bibitem{LRVpreprint}
M.~Lieberman, J.~Rosick{\' y}, and S.~Vasey.
\newblock Hilbert spaces and $\mathrm{C}^*$-algebras are not finitely concrete.
\newblock Preprint available at
  {\href{https://arxiv.org/abs/1908.10200}{\texttt{arXiv:1908.10200}}}.

\bibitem{MarraReggio2017}
V.~Marra and L.~Reggio.
\newblock Stone duality above dimension zero: axiomatising the algebraic theory
  of {${\rm C}(X)$}.
\newblock {\em Adv. Math.}, 307:253--287, 2017.

\bibitem{Nachbin}
L.~Nachbin.
\newblock {\em Topology and order}.
\newblock Translated from the Portuguese by Lulu Bechtolsheim. Van Nostrand
  Mathematical Studies, No. 4. D. Van Nostrand Co., Inc., Princeton,
  N.J.-Toronto, Ont.-London, 1965.

\bibitem{Pedicchio1996}
M.~C.~Pedicchio.
\newblock On {$k$}-permutability for categories of {$T$}-algebras.
\newblock In {\em Logic and algebra ({P}ontignano, 1994)}, volume 180 of {\em
  Lecture Notes in Pure and Appl. Math.}, pages 637--646. Dekker, New York,
  1996.

\bibitem{Priestley1970}
H.~A.~Priestley.
\newblock Representation of distributive lattices by means of ordered {S}tone
  spaces.
\newblock {\em Bull. London Math. Soc.}, 2:186--190, 1970.

\bibitem{Richter1971}
M.~Richter.
\newblock Limites in {K}ategorien von {R}elationalsystemen.
\newblock {\em Z. Math. Logik Grundlagen Math.}, 17:75--90, 1971.

\bibitem{Stone1936}
M.~H.~Stone.
\newblock The theory of representations for {B}oolean algebras.
\newblock {\em Trans. Amer. Math. Soc.}, 40(1):37--111, 1936.

\bibitem{Stone1937}
\bysame,
\newblock Applications of the theory of {B}oolean rings to general topology.
\newblock {\em Trans. Amer. Math. Soc.}, 41(3):375--481, 1937.

\bibitem{Stone1938}
\bysame,
\newblock Topological representations of distributive lattices and brouwerian
  logics.
\newblock {\em {\v C}asopis pro p{\v e}siov{\'a}n{\'\i} matematiky a fysiky},
  67:1--25, 1938.

\bibitem{Tholen2009}
W.~Tholen.
\newblock Ordered topological structures.
\newblock {\em Topology Appl.}, 156(12):2148--2157, 2009.

\bibitem{Vitale1994}
E.~M.~Vitale.
\newblock On the characterization of monadic categories over {SET}.
\newblock {\em Cahiers Topologie G\'{e}om. Diff\'{e}rentielle Cat\'{e}g.},
  35(4):351--358, 1994.

\end{thebibliography}

\end{document}